\newtheorem{theorem}{Theorem}[section]
\newtheorem{prop}{Proposition}[section]
\newtheorem{lemma}[theorem]{Lemma}
\newtheorem{cor}[theorem]{Corollary}
\newtheorem{definition}[theorem]{Definition}
\newtheorem{example}[theorem]{Example}
\newtheorem{fact}[theorem]{Fact}
\newtheorem{Note}[theorem]{Note}
\newtheorem{remark}[theorem]{Remark}
\numberwithin{equation}{section}
\title[Timelike minimal surface]{Timelike minimal surface in $\mathbb{E}^3_1$ with arbitrary ends}
\author{Priyank Vasu}
\author{Rahul Kumar Singh}
\author{Subham Paul}
\address{Department of Mathematics, Indian Institute of Technology Patna, Bihta, Patna-801106, Bihar, India}
\thanks{Priyank Vasu: \href{mailto:priyank_2121ma16@iitp.ac.in}{priyank\_2121ma16@iitp.ac.in}}
\thanks{Rahul Kumar Singh: \href{mailto:rahulks@iitp.ac.in}{rahulks@iitp.ac.in}}
\thanks{Subham Paul: \href{mailto:subham_2021ma25@iitp.ac.in}{subham\_2021ma25@iitp.ac.in}}
\subjclass[2020]{ 53A10, 53C42, 30G35}
\keywords{Timelike minimal surface, bicomplex numbers, timelike minimal surface with ends, complete maximal surface, zero mean curvature surface}
\begin{document}
\begin{abstract}
In this paper, we show the existence of a timelike minimal surface with an arbitrary number of weak complete ends. Then, we discuss the asymptotic behaviour of the simple ends and the topology of the singularity set of the constructed timelike minimal surface.
\end{abstract}
\maketitle
	\section{Introduction}
	
	A timelike surface in Lorentz-Minkowski 3-space $\mathbb{E}^3_1:=(\mathbb R^3, dx^2+dy^2-dz^2)$ whose mean curvature function vanishes identically is called a timelike minimal surface, and a spacelike surface with vanishing mean curvature function is called a spacelike minimal surface or more commonly a maximal surface. The theory of maximal surfaces is well-studied, and much progress has already been seen in the direction of maximal surfaces when compared to the theory of timelike minimal surfaces (see \cite{umehara2006maximal,kumar2023genus}, etc)

One of the important tools to study the geometry of zero mean curvature surfaces in $\mathbb{E}^3_1$ is their Weierstrass-Enneper representation, which O. Kobayashi gave in \cite{Kobayashi} for maximal surfaces and by J.J. Konderack in \cite{Konderak} for timelike minimal surfaces using split-complex numbers.

In this paper, motivated by the work of P. Kumar and S.R.R. Mohanty in \cite{kumar2023genus}, where they have shown the existence of genus-zero complete maximal surfaces with an arbitrary number of ends, we have considered the problem of the existence of a timelike minimal surface with an arbitrary number of \textit{weak complete ends}.  Since a timelike surface by definition is Lorentzian, and the fact that completeness is not a natural property of a Lorentzian metric as it is for a Riemannian metric (this is mainly due to the absence of results like the Hopf-Rinow theorem), see page 32 in \cite{Weinstein}. 
Because of this reason, in our problem, we consider the notion of \textit{weak completeness} instead of \textit{completeness}.

 In order to show the existence of such a timelike minimal surface requires addressing the period condition as given in Theorem \ref{thm timelike}, which is significantly more challenging than addressing the corresponding period condition for maximal and minimal surfaces. This complexity arises due to the lack of analytical tools available in the split-complex analysis compared to complex analysis (like the unavailability of the Cauchy integral type formula). A timelike minimal surface with ends causes the domain to become disconnected (see Definition \ref{End definition}), making it hard to define the period condition, and hence, it is difficult to understand how a timelike minimal surface behaves across the disconnected parts. To solve this problem, we have exploited the relationships between complex, split-complex, and bicomplex numbers. We believe that this result (see Theorem \ref{main result}) will make an important contribution to the global theory of timelike minimal surfaces in $\mathbb{E}^3_1$.

 We have also shown that the sum of two maximal surfaces (with certain assumptions on their corresponding Weierstrass data) restricted on the split-complex plane is a timelike minimal surface (see Proposition \ref{sum of two maximal surfaces}).  As a consequence of this fact, we have shown that the topology of the singularity set of the timelike minimal surface depends on the singularity set of the generating maximal surfaces (see Proposition \ref{Compactness_singularity}). Furthermore, we have also studied the asymptotic behaviour of the simple ends of a timelike minimal surface (see Theorem \ref{Asymptotic behaviour}). 

	This paper is organised as follows: In section \ref{Preliminaries}, we summarize basic facts about bicomplex numbers and bicomplex functions in a form suitable for our purpose and give basic definitions, and introduce the notion of \textit{bicomplex surface}. In section \ref{section: Timelike minimal surface from bicomplex surface}, we have defined and solved the \textit{period condition} for the bicomplex surface. In section \ref{section: Singularities of timelike minimal surface}, we study the topology of the singularity set of the timelike minimal surface. In section \ref{section: Asymptotic behavior of simple ends}, we study the asymptotic behaviour of simple ends of the timelike minimal surface. Finally, in section \ref{section: arbitrary number of ends}, we prove the existence of a timelike minimal surface with $n$ number of weak complete ends.

	\section{Preliminaries} \label{Preliminaries}
 The ring of \textit{bicomplex} numbers denoted by $\mathbb{BC}$ can be identified as $\mathbb{R}^4$ with the standard topology. It is defined as
$$
\begin{aligned}
\mathbb{BC} := & \left\{\tilde{z}=x_1+ \mathbf{i} \,x_2+ \mathbf{j}\, x_3+ \mathbf{k} \,x_4 \mid x_1, x_2, x_3, x_4 \in \mathbb{R}\right\}\\
= & \left\{\tilde{z}=z_1+ \mathbf{j}\, z_2 \mid z_1=x_1+\mathbf{i}\,x_2, z_2=x_3+\mathbf{i}\,x_4 \in \mathbb{C}(\mathbf{i})\right\}, 
\end{aligned}
$$
where $\mathbf{i}$, $\mathbf{j}$ and $\mathbf{k}$ are imaginary units, satisfying,
$$
\mathbf{i} \neq \mathbf{j} , \quad \mathbf{i j}=\mathbf{j i},\quad \mathbf{k} =\mathbf{i j}, \quad \mathbf{i}^2=\mathbf{j}^2=-1,
$$
so that $\mathbf{k}^2=1$. The ring of split-complex numbers denoted by $\mathbb{D}$  is defined as,
$$\mathbb{D}:=\left\{ \hat{z}=x_1+\mathbf{k}\,x_4  \mid x_1,x_4 \in \mathbb{R}\right\} \subset \mathbb{BC}.$$

 Throughout this article, we will use the following notations and conventions.
\subsection{Notations and Conventions}

\begin{itemize}
    \item  If \(\tilde{z}=x_1+ \mathbf{i}\, x_2+ \mathbf{j} \,x_3+ \,\mathbf{k} \,x_4\), then \(\operatorname{Re}(\tilde{z}):=x_1\), \(\operatorname{Im}_{\mathbf{i}}(\tilde{z}):=x_2\), \(\operatorname{Im}_{\mathbf{j}}(\tilde{z}):=x_3\), \(\operatorname{Im}_{\mathbf{k}}(\tilde{z}):=x_4\), and $\tilde{z}$ restricted to $\mathbb{D}$ is denoted by $\tilde{z}\bigr|_{\mathbb{D}}=x_1+ \mathbf{k}\, x_4$, and $\tilde{f}: \tilde{\Omega} \subset \mathbb{BC} \rightarrow \mathbb{BC}$ denotes a bicomplex-valued function, where $\tilde{\Omega}$ is an open set in $\mathbb{BC}$, and $\tilde{f}\bigr|_{\mathbb{D}}$ denotes $\tilde{f}$ restricted to $\tilde{\Omega} \cap  \mathbb{D}$.

  \item 
  If \(\hat{z}=x+\mathbf{k}\,y\), then \(\operatorname{Re}(\hat{z}):=x\) and \(\operatorname{Im}(\hat{z}):=y\), 
 and $\hat{f}: \hat{\Omega} \subset \mathbb{D} \rightarrow \mathbb{D}$ denotes a split-complex-valued function, where $\hat{\Omega}$ is an open set in $\mathbb{D}$. If $\hat{z} \in \mathbb{D}$, then its norm is defined by ${|\hat{z}|}^2_{\mathbb{D}}:={x}^2-{y}^2$.

    \item The set \(\mathbb{C}(\mathbf{i})\) denotes the field complex numbers with $\mathbf{i}$ as its imaginary unit. If \ \(z= a+\mathbf{i}\,b\), then \(\operatorname{Re}(z):=a\), \(\operatorname{Im}(z):=b\), and ${f}: {\Omega} \subset \mathbb{C}(\mathbf{i}) \rightarrow \mathbb{C}(\mathbf{i})$ denotes a complex valued function, where $\Omega$ is an open set in $\mathbb{C}(\mathbf{i})$. 
    
\end{itemize}

The rings $\mathbb{B C}$ and $\mathbb{D}$ have zero divisors; the set of zero divisors in $\mathbb {BC}$ is denoted by
$
\mathfrak{S}:=\left\{\tilde{z} \mid \tilde{z} \neq 0, z_1^2+z_2^2=0\right\}
$
and let $
\mathfrak{S}_0:=\mathfrak{S} \cup\{0\}
$. Then the set $\mathfrak{S} \cap \mathbb{D}=\{\hat{z}=x+\mathbf{k}\,y \mid \hat{z} \neq 0, x^2-y^2=0\} $ will denote the set of zero divisors in $\mathbb{D}$ and let $S:=\mathfrak{S}_0 \cap \mathbb{D}$.

Let $\mathbf{e}, \mathbf{e}^{\dagger}\in\mathbb{D}$ such that
\begin{equation}
\label{def_of_e}
    \mathbf{e}:=\frac{1+\mathbf{k}}{2},\quad
 \mathbf{e}^{\dagger}:=\frac{1-\mathbf{k}}{2}.
\end{equation}
 
Then it can be easily checked that they satisfy the following:
$$
\begin{gathered}
\mathbf{e}+\mathbf{e}^{\dagger}=1 ; \quad \mathbf{e}-\mathbf{e}^{\dagger}=\mathbf{k} ; \\
\mathbf{e e}^{\dagger}=0 ; \quad \mathbf{e e}=\mathbf{e} \quad \text { and } \quad \mathbf{e}^{\dagger} \mathbf{e}^{\dagger}=\mathbf{e}^{\dagger} .
\end{gathered}
$$

Next, if we consider the following two subsets $
\mathbb{B C}_{\mathbf{e}}:=\left\{\beta_1 \mathbf{e} \mid \beta_1 \in \mathbb{C}(\mathbf{i})\right\} \text { and } \mathbb{B C}_{\mathbf{e}^{\dagger}}:=\left\{\beta_2 \mathbf{e}^{\dagger} \mid \beta_2 \in \mathbb{C}(\mathbf{i})\right\} 
$ of $\mathbb{BC}$.
Then it turns out that every $\tilde{z}$ in $\mathbb {BC}$ can be uniquely expressed as
$
\tilde{z}=\beta_1 \mathbf{e}+\beta_2 \mathbf{e}^{\dagger},
$
where
$
\beta_1=z_1- \mathbf{i}\, z_2 \text { and } \beta_2=z_1+ \mathbf{i}\, z_2
$
belongs to $\mathbb {C(\mathbf i)}$. This representation of the bicomplex number is called the idempotent representation. Furthermore, any $\hat z \in \mathbb D$ can be written as 
\begin{equation}
\label{subham_3}
    \hat{z}=\tilde{z}\bigr|_{\mathbb{D}}=\operatorname{Re}(\beta_1) \mathbf{e}+\operatorname{Re}(\beta_2) \mathbf{e}^{\dagger}.
\end{equation}
Next, for any $\tilde{z}\in\mathbb{BC}$ we denote its conjugate by $\tilde{z}^*$  and it is defined as 
 $\tilde{z}^*:=\bar{z}_1- \mathbf{j} \,\bar{z}_2=\bar{\beta}_1 \mathbf{e}+\bar{\beta}_2 \mathbf{e}^{\dagger}$, where $\bar{z}$ denotes the usual complex conjugation.

Now, we state some facts about functions defined on $\mathbb {BC}$, which will be used in this article.

Let $\tilde{F}: \tilde{\Omega} \subset \mathbb{BC} \rightarrow \mathbb{BC}$ be a bicomplex function. Then $\tilde{F}$ can be expressed in the idempotent form as:
$\tilde{F}=G_1 \mathbf{e}+G_2 \mathbf{e}^{\dagger},
$
where $G_1,G_2$ both are complex valued functions of two complex variables $\beta_1,\beta_2$ in general. Then we have the following theorems:

\begin{theorem}\label{differential thm}
(\cite{Luna2015}) The bicomplex function $\tilde{F}$ is holomorphic if and only if $G_1(\beta_1)$ and $G_2(\beta_2)$  are holomorphic functions.

\end{theorem}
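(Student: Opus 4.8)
The plan is to reduce everything to ordinary one--variable complex analysis by exploiting the idempotent decomposition, under which multiplication in $\mathbb{BC}$ becomes componentwise. Writing $\tilde{z}=\beta_1\mathbf{e}+\beta_2\mathbf{e}^{\dagger}$ and an increment $\tilde{h}=h_1\mathbf{e}+h_2\mathbf{e}^{\dagger}$, the relations $\mathbf{e}\mathbf{e}=\mathbf{e}$, $\mathbf{e}^{\dagger}\mathbf{e}^{\dagger}=\mathbf{e}^{\dagger}$, $\mathbf{e}\mathbf{e}^{\dagger}=0$ give $\tilde{z}\tilde{h}=\beta_1 h_1\mathbf{e}+\beta_2 h_2\mathbf{e}^{\dagger}$, and $\tilde{h}$ is invertible precisely when $h_1\neq 0$ and $h_2\neq 0$, with $\tilde{h}^{-1}=h_1^{-1}\mathbf{e}+h_2^{-1}\mathbf{e}^{\dagger}$. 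First I would substitute $\tilde{F}=G_1\mathbf{e}+G_2\mathbf{e}^{\dagger}$ into the bicomplex difference quotient and use this componentwise arithmetic to obtain
\begin{equation*}
\frac{\tilde{F}(\tilde{z}+\tilde{h})-\tilde{F}(\tilde{z})}{\tilde{h}}=\frac{G_1(\beta_1+h_1,\beta_2+h_2)-G_1(\beta_1,\beta_2)}{h_1}\,\mathbf{e}+\frac{G_2(\beta_1+h_1,\beta_2+h_2)-G_2(\beta_1,\beta_2)}{h_2}\,\mathbf{e}^{\dagger},
\end{equation*}
so that the single bicomplex limit splits into two scalar quotients that must be controlled simultaneously as $(h_1,h_2)\to(0,0)$ with $h_1,h_2\neq 0$.

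For the direction ($\Leftarrow$), assume $G_1=G_1(\beta_1)$ and $G_2=G_2(\beta_2)$ are holomorphic. Then each quotient above involves only its own variable; letting $\tilde{h}\to 0$ through invertible elements is equivalent to $h_1\to 0$ and $h_2\to 0$ independently through nonzero complex values (the idempotent coordinates being continuous linear functions of $\tilde{h}$), and holomorphy of $G_1,G_2$ yields convergence to $G_1'(\beta_1)$ and $G_2'(\beta_2)$. Since the projections onto the $\mathbf{e}$-- and $\mathbf{e}^{\dagger}$--components are continuous, the bicomplex limit exists independently of the path and $\tilde{F}'(\tilde{z})=G_1'(\beta_1)\mathbf{e}+G_2'(\beta_2)\mathbf{e}^{\dagger}$, proving $\tilde{F}$ holomorphic.

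The substantive direction is ($\Rightarrow$), and this is where I expect the main difficulty, since one must upgrade the mere existence of a bicomplex derivative to the structural statement that $G_1$ is independent of $\beta_2$ (and of $\bar{\beta}_1,\bar{\beta}_2$) and $G_2$ independent of $\beta_1$. I would argue that a bicomplex derivative at a point implies real differentiability there with differential equal to multiplication by $\tilde{F}'(\tilde{z})=:a\mathbf{e}+b\mathbf{e}^{\dagger}$, which is $\mathbb{BC}$--linear. Expanding the real differential $dG_1=\partial_{\beta_1}G_1\,d\beta_1+\partial_{\bar{\beta}_1}G_1\,d\bar{\beta}_1+\partial_{\beta_2}G_1\,d\beta_2+\partial_{\bar{\beta}_2}G_1\,d\bar{\beta}_2$ and matching it against $\tilde{h}\mapsto a h_1$ on the $\mathbf{e}$--component (using $d\beta_1(\tilde{h})=h_1$, $d\beta_2(\tilde{h})=h_2$, and that $h_1,h_2$ range freely) forces $\partial_{\bar{\beta}_1}G_1=\partial_{\beta_2}G_1=\partial_{\bar{\beta}_2}G_1=0$ and $\partial_{\beta_1}G_1=a$; hence $G_1=G_1(\beta_1)$ is holomorphic, and symmetrically $G_2=G_2(\beta_2)$ with $\partial_{\beta_2}G_2=b$.

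The main obstacle throughout is the forward implication's dependence on honest partial derivatives: the bicomplex difference quotient only samples the dense open cone of invertible increments, so the pure coordinate directions $h_2=0$ and $h_1=0$ (themselves zero divisors) are excluded, and one cannot simply freeze one variable. The real--differentiability mechanism circumvents this by recovering all four Wirtinger derivatives at once from a single linear differential. As a fallback not relying on differentiability theory, I could instead apply the continuous projection $\pi_1(\beta_1\mathbf{e}+\beta_2\mathbf{e}^{\dagger})=\beta_1$ to the split quotient and, comparing increments sharing a common $h_1$ but distinct $h_2$, extract both the holomorphy of $G_1$ in $\beta_1$ and the vanishing of its $\beta_2$--dependence to first order from the existence of the joint limit. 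Once the decoupling and the linear--algebra matching are in place, both implications follow, establishing the stated equivalence.
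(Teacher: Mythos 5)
The paper offers no proof of this statement---it is imported verbatim from \cite{Luna2015}---so there is no in-paper argument to compare against, and your proposal must be judged on its own merits. Your algebraic setup is correct: multiplication, and hence division by invertible increments, is componentwise in the idempotent coordinates, $\tilde{h}=h_1\mathbf{e}+h_2\mathbf{e}^{\dagger}$ is invertible exactly when $h_1h_2\neq 0$, and the difference quotient splits as you write. Your ($\Leftarrow$) direction is complete: when $G_1,G_2$ are one-variable holomorphic functions, each component quotient depends only on its own increment, so the joint limit over invertible $\tilde{h}$ exists and equals $G_1'(\beta_1)\mathbf{e}+G_2'(\beta_2)\mathbf{e}^{\dagger}$.

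The ($\Rightarrow$) direction, however, has a genuine gap, located exactly at the step you lean on: the claim that a bicomplex derivative \emph{at a point} implies real (Fr\'echet) differentiability at that point. As stated, this is false. The bicomplex quotient yields the estimate $\tilde{F}(\tilde{z}+\tilde{h})-\tilde{F}(\tilde{z})-\tilde{F}'(\tilde{z})\tilde{h}=o(|\tilde{h}|)$ only for invertible $\tilde{h}$, whereas real differentiability requires it for all $\tilde{h}$, including those in the zero-divisor cone. Concretely: fix $\tilde{z}_0$, let $\tilde{F}$ vanish off the punctured cone $\tilde{z}_0+\mathfrak{S}$ and equal $1$ on it; every difference quotient over invertible increments is $0$, so the bicomplex derivative at $\tilde{z}_0$ exists and equals $0$, yet $\tilde{F}$ is not even continuous at $\tilde{z}_0$. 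Thus your sentence ``the real-differentiability mechanism circumvents this'' is circular---real differentiability is precisely what the excluded directions prevent you from getting for free---and your fallback has the same defect: the joint limit only gives $G_1(\beta_1+h_1,\beta_2+h_2)-G_1(\beta_1+h_1,\beta_2+h_2')=o(h_1)$ for $h_1\neq 0$, and passing to $h_1=0$ requires a continuity statement for $G_1$ that you never establish. The missing ingredient is any essential use of the actual hypothesis, namely derivability at \emph{every} point of the open set $\tilde{\Omega}$. With it, your step can be repaired: for a small zero divisor $\tilde{h}_0$, derivability at the nearby point $\tilde{z}+\tilde{h}_0$ gives $\tilde{F}(\tilde{z}+\tilde{h}_0+\tilde{k})\to\tilde{F}(\tilde{z}+\tilde{h}_0)$ as $\tilde{k}\to 0$ through invertibles, and writing $\tilde{h}_0$ as a limit of invertible increments $\tilde{h}_0+\tilde{k}$ one passes the $o(|\cdot|)$ estimate to the limit, obtaining real differentiability; only then does your (correct) Wirtinger-matching argument apply. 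Alternatively, one can follow the classical route: increments along $1$ and $\mathbf{j}$ are invertible, so the components are separately holomorphic in $z_1$ and $z_2$; Hartogs' theorem upgrades this to joint holomorphy, and the resulting Cauchy--Riemann-type system gives $\partial G_1/\partial\beta_2=\partial G_2/\partial\beta_1=0$. A final minor caveat: vanishing of $\partial_{\beta_2}G_1$ and $\partial_{\bar{\beta}_2}G_1$ yields independence of $\beta_2$ only when the $\beta_2$-slices of the domain are connected, e.g.\ on the product domains $\Omega_1\mathbf{e}+\Omega_2\mathbf{e}^{\dagger}$ the paper actually works with.
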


 \begin{theorem}(\cite{Charak})
 The  bicomplex function $\tilde{F}$ is meromorphic if and only if  $G_1(\beta_1)$ and $G_2(\beta_2)$ are meromorphic functions.
     
 \end{theorem}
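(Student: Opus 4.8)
The plan is to reduce everything to the holomorphic characterization of Theorem~\ref{differential thm} by exploiting the ring structure of the idempotent decomposition. Recall that the identities $\mathbf{e}+\mathbf{e}^{\dagger}=1$, $\mathbf{e}\mathbf{e}^{\dagger}=0$, $\mathbf{e}\mathbf{e}=\mathbf{e}$ and $\mathbf{e}^{\dagger}\mathbf{e}^{\dagger}=\mathbf{e}^{\dagger}$ imply that for bicomplex functions written in idempotent form both addition and multiplication act componentwise: if $\tilde{F}=G_1\mathbf{e}+G_2\mathbf{e}^{\dagger}$ and $\tilde{H}=H_1\mathbf{e}+H_2\mathbf{e}^{\dagger}$, then $\tilde{F}\tilde{H}=G_1H_1\mathbf{e}+G_2H_2\mathbf{e}^{\dagger}$, and wherever $G_1,G_2$ are nonvanishing, $\tilde{F}^{-1}=G_1^{-1}\mathbf{e}+G_2^{-1}\mathbf{e}^{\dagger}$. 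Thus the assignment $\tilde{F}\mapsto(G_1,G_2)$ behaves as a ring homomorphism onto pairs of complex functions, and the strategy is to push the notion of meromorphy through this identification in both directions.

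For the reverse implication, assume $G_1(\beta_1)$ and $G_2(\beta_2)$ are meromorphic. Their pole sets are discrete in $\mathbb{C}(\mathbf{i})$, and off these sets both are holomorphic, so Theorem~\ref{differential thm} gives that $\tilde{F}$ is holomorphic there. Near any point lying over a pole of $G_1$ or $G_2$ we write $G_1=P_1/Q_1$ and $G_2=P_2/Q_2$ as local quotients of holomorphic functions with $Q_1,Q_2\not\equiv 0$; setting $\tilde{P}=P_1\mathbf{e}+P_2\mathbf{e}^{\dagger}$ and $\tilde{Q}=Q_1\mathbf{e}+Q_2\mathbf{e}^{\dagger}$, the componentwise multiplication rule gives $\tilde{F}=\tilde{P}\,\tilde{Q}^{-1}$ with $\tilde{P},\tilde{Q}$ holomorphic (again by Theorem~\ref{differential thm}) and $\tilde{Q}$ not identically a zero divisor. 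This exhibits $\tilde{F}$ as a local quotient of holomorphic bicomplex functions, hence as a bicomplex meromorphic function.

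For the forward implication, assume $\tilde{F}$ is meromorphic, so locally $\tilde{F}=\tilde{P}\,\tilde{Q}^{-1}$ with $\tilde{P}=P_1\mathbf{e}+P_2\mathbf{e}^{\dagger}$ and $\tilde{Q}=Q_1\mathbf{e}+Q_2\mathbf{e}^{\dagger}$ holomorphic. By Theorem~\ref{differential thm} the components $P_1,P_2,Q_1,Q_2$ are holomorphic in their respective variables. On the locus where $Q_1,Q_2\neq 0$ we have $\tilde{Q}^{-1}=Q_1^{-1}\mathbf{e}+Q_2^{-1}\mathbf{e}^{\dagger}$, and the multiplication rule yields $\tilde{F}=(P_1/Q_1)\mathbf{e}+(P_2/Q_2)\mathbf{e}^{\dagger}$. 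Matching idempotent components forces $G_1=P_1/Q_1$ and $G_2=P_2/Q_2$, each a quotient of holomorphic functions of a single complex variable, and therefore meromorphic.

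The main obstacle I expect is not the algebra but reconciling the two notions of \textit{pole}. In $\mathbb{C}(\mathbf{i})$ the singularities of $G_1,G_2$ are isolated points, whereas the singular set of $\tilde{F}$ in $\mathbb{BC}$ is never isolated: a pole of $G_1$ at $\beta_1=a$ corresponds to the whole characteristic set $\{\tilde{z}:\beta_1=a\}$, on which $\tilde{F}$ either blows up or becomes a zero divisor. The delicate step is therefore to verify that the bicomplex definition of meromorphy used in \cite{Charak}, phrased in terms of the polar set and local quotient representations in $\mathbb{BC}$, is exactly the one compatible with the componentwise quotient structure above, so that the polar set of $\tilde{F}$ decomposes cleanly as the union of the characteristic sets coming from the poles of $G_1$ and of $G_2$. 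Once this compatibility is pinned down, the equivalence follows at once from the two implications.
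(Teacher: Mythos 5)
First, a structural point: the paper contains no proof of this statement at all. It is quoted from \cite{Charak}, and immediately afterwards the paper simply \emph{adopts} the componentwise description as its working convention for poles (``We say $\tilde{F}$ has a pole at $\tilde{p}=p_1\mathbf{e}+p_2\mathbf{e}^{\dagger}$ if $G_1$ and $G_2$ has a pole at $p_1$ and $p_2$ respectively''). So there is no argument of the paper to compare yours against; your proposal must stand on its own. On its own terms, the algebraic core is fine: the componentwise multiplication rule, the inversion formula $\tilde{F}^{-1}=G_1^{-1}\mathbf{e}+G_2^{-1}\mathbf{e}^{\dagger}$ off the zero-divisor locus $\mathfrak{S}_0$, and the two applications of Theorem~\ref{differential thm} (giving single-variable holomorphic components of $\tilde{P}$ and $\tilde{Q}$, hence single-variable meromorphic quotients $G_1=P_1/Q_1$, $G_2=P_2/Q_2$) are all correct.

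The genuine gap is the one you name yourself in the final paragraph and then defer rather than close: your entire argument is conditional on ``bicomplex meromorphic'' meaning ``locally expressible as $\tilde{P}\,\tilde{Q}^{-1}$ with $\tilde{P},\tilde{Q}$ bicomplex holomorphic and $\tilde{Q}$ invertible off its zero-divisor set.'' That is an assumption about the definition, not a verified fact. If the definition in \cite{Charak} (or in this paper's usage) is the componentwise one, the theorem is nearly immediate from Theorem~\ref{differential thm} and your quotient machinery is beside the point; if instead meromorphy is defined intrinsically, through the behaviour of $\tilde{F}$ near its singular set --- which, as you correctly observe, is never isolated but consists of whole characteristic sets $\{\beta_1=p_1\}$ and $\{\beta_2=p_2\}$ --- then the equivalence between that intrinsic notion and your local-quotient notion is precisely the substance of the theorem, and assuming it is circular. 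A complete proof must fix one definition and derive the componentwise characterization from it: for instance, show that holomorphy of $\tilde{F}$ on a product neighbourhood of $\tilde{p}$ minus the characteristic sets forces $G_1,G_2$ to be single-variable holomorphic functions with isolated singularities at $p_1,p_2$, and then show that the singularity condition defining a bicomplex pole forces those isolated singularities to be poles rather than essential singularities or removable ones. As written, your proposal establishes the equivalence of two quotient-type descriptions, which is a correct but different (and strictly weaker) statement than the cited theorem.
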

 We say $\tilde{F}$ has a pole at $\tilde{p}=p_1 \mathbf{e}+p_2 \mathbf{e}^{\dagger}$ if $G_1$ and $G_2$ has a pole at $p_1$ and $p_2$ respectively.
For more details about bicomplex functions and their properties, one can see \cite{Luna2015},\cite{Charak}, and \cite{Luna}.

 \begin{definition}
 
  Let $\hat{f}:\hat{\Omega} \subset \mathbb{D} \rightarrow \mathbb{D}$ be a function defined by $\hat{f}(\hat{z})=\hat{f}(x+\mathbf{k}\,y)=u(x,y)+\mathbf{k}\,v(x,y)$, where $u,v$ are real-valued $C^1$ functions defined on $\hat{\Omega}$. We say $\hat{f}$ is split-holomorphic if it satisfy $u_x=v_y$ and $u_y=v_x$ on $\hat{\Omega}$.
  \end{definition}
  
The question of split-meromorphic functions is more complicated. We will say that
$\hat{f}$ has a pole at $\hat{z}_0$ if and only if $\frac{1}{\hat{f}}$ is zero at $\hat{z}_0$. However, it may happen that zeroes of $\hat{f}$ are not isolated, or even if zeroes of $\hat{f}$ are isolated, then the pole is not isolated, as can be seen in the example below,
 \begin{example}\label{pole example}
Consider the following function:
 $$
 \hat{f}(\hat{z})=\frac{1}{\hat{z}}=\frac{x-\mathbf{k}y}{(x+\mathbf{k}y)(x-\mathbf{k}y)}=\frac{x-\mathbf{k}y}{x^2-y^2};\,\,\forall \hat{z}=x+\mathbf{k}y\in \mathbb{D}\backslash S.
 $$

 \end{example}

     A map $\hat{f}:\hat{\Omega} \subset \mathbb{D} \rightarrow \mathbb{D}$ is \textit{split-analytic} if it has a power series expansion in $\hat{\Omega}$. The split-analyticity implies split-holomorphicity. But the converse is not always true (see \cite{Inoguchi}).

\begin{definition}
   Let $\hat{f}:\hat{\Omega} \subset \mathbb{D} \rightarrow \mathbb{D}$ be split-analytic and let $\hat{z}_0$ be a zero of $\hat{f}$, then $\hat{f}(\hat{z})=(\hat{z}-\hat{z}_0)^m\hat{h}(\hat{z})$ in a neighbourhood of $\hat{z}_0$ for some split-analytic function $\hat{h}$ with $\hat{h}(\hat{z}_0)\neq0$. We call $m$ to be the order of zero of $\hat{f}$ at $\hat{z}_0$. We would call a function $\hat{g}$ to be split-meromorphic if $\hat{g}=\frac{1}{\hat{f}}$, where $\hat{f}$ is a split-analytic function and $\hat{p}$ is called a pole of $\hat{g}$ if $\hat{p}$ is a zero of $\hat{f}$. 
\end{definition}
Let $\hat{P}$ denote the set of poles of the split-meromorphic function $\hat{g}$, then we call $\hat{P}+S$ the set of associated zero divisors of $\hat{P}$. 
\begin{remark}\label{remak for pole of split-meromorphic}
 In order for the function $\hat{g}$ (as above) to be well defined on $\hat{\Omega}$, we have to remove the $\hat{P}+S$ from $\hat{\Omega}$ (see Example \ref{pole example}). Now, note that the domain $\hat{\Omega}\backslash (\hat{P}+S)$ is disconnected.
\end{remark}
 
In this article, by split-holomorphic function, we always mean split-analytic function only. For more details about the theory of split-holomorphic and split-meromorphic functions on $\mathbb{D}$, one can check \cite{Inoguchi} and \cite{Akamine}.

Next, we state some facts about the bicomplex functions, which highlight their relation to the split-holomorphic and split-meromorphic functions.

\begin{fact}(\cite{Kato})
    Let $\tilde{F}: \tilde{\Omega} \rightarrow \mathbb{BC}$ be a bicomplex function satisfying 
\begin{equation}\label{5.4}
    \tilde{F}(\tilde{z}^*)=(\tilde{F}(\tilde{z}))^*
    \quad \forall \tilde{z} \in \tilde{\Omega} \cap \tilde{\Omega}^* \subset \mathbb{BC}. 
\end{equation}
Then $\tilde{F}(\tilde{z}) \in \mathbb{D} \text{ for all }  \tilde{z} \in \tilde{\Omega} \cap  \mathbb{D} $. In other words, $\tilde{F}\bigr|_{\mathbb{D}}$  becomes a split-complex valued function.
\end{fact}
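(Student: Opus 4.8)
The plan is to push everything into the idempotent representation, where the conjugation $*$ acts transparently, and to characterize $\mathbb{D}$ inside $\mathbb{BC}$ as the fixed-point set of $*$. First I would record the key algebraic observation: writing an arbitrary $\tilde{w}=\gamma_1\mathbf{e}+\gamma_2\mathbf{e}^{\dagger}$, the defining formula $\tilde{w}^*=\bar{\gamma}_1\mathbf{e}+\bar{\gamma}_2\mathbf{e}^{\dagger}$ shows that $\tilde{w}^*=\tilde{w}$ holds precisely when $\gamma_1=\bar{\gamma}_1$ and $\gamma_2=\bar{\gamma}_2$, i.e.\ when both idempotent components are real. By the identity $\tilde{z}\bigr|_{\mathbb{D}}=\operatorname{Re}(\beta_1)\mathbf{e}+\operatorname{Re}(\beta_2)\mathbf{e}^{\dagger}$ recorded in \eqref{subham_3}, this is exactly the condition $\tilde{w}\in\mathbb{D}$. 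Thus I obtain the clean equivalence $\tilde{w}\in\mathbb{D}\iff\tilde{w}^*=\tilde{w}$, which I will use twice: once for the input and once for the output of $\tilde{F}$.

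The second step is to check that the hypothesis \eqref{5.4} is genuinely applicable at the points of interest. Let $\tilde{z}\in\tilde{\Omega}\cap\mathbb{D}$. By the equivalence just established, $\tilde{z}^*=\tilde{z}$, so $\tilde{z}^*=\tilde{z}\in\tilde{\Omega}$, whence $\tilde{z}\in\tilde{\Omega}^*$ as well; therefore $\tilde{z}\in\tilde{\Omega}\cap\tilde{\Omega}^*$ and the functional equation is valid at $\tilde{z}$.

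The final step is then a one-line deduction. Applying \eqref{5.4} and using $\tilde{z}^*=\tilde{z}$ gives
\[
\tilde{F}(\tilde{z})=\tilde{F}(\tilde{z}^*)=\bigl(\tilde{F}(\tilde{z})\bigr)^*,
\]
so the value $\tilde{F}(\tilde{z})$ is fixed by the conjugation $*$. Invoking the equivalence from the first step once more, now applied to the output $\tilde{w}=\tilde{F}(\tilde{z})$, I conclude $\tilde{F}(\tilde{z})\in\mathbb{D}$, which is the assertion.

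I expect the only genuinely delicate point to be the fixed-point characterization of $\mathbb{D}$ in the first step; everything downstream is formal. A secondary subtlety worth flagging is that no holomorphicity, or even continuity, of $\tilde{F}$ is required: the argument is entirely pointwise, relying only on the idempotent splitting of $\mathbb{BC}$ (where in general the components depend on both $\beta_1,\beta_2$) together with the fact that $*$ is componentwise complex conjugation. One could alternatively proceed component-wise, rewriting \eqref{5.4} as $G_j(\bar{\beta}_1,\bar{\beta}_2)=\overline{G_j(\beta_1,\beta_2)}$ and specializing to real $\beta_1,\beta_2$ to force $G_j$ to be real there, but the fixed-point route avoids introducing $G_1,G_2$ explicitly and is the cleaner of the two.
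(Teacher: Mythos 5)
Your proof is correct, and it is the natural (essentially unique) argument: the paper itself states this as a Fact cited from \cite{Kato} without reproducing a proof, and the standard proof there proceeds exactly as you do, by observing that $\mathbb{D}$ is precisely the fixed-point set of the conjugation $*$ in $\mathbb{BC}$ and then applying the functional equation \eqref{5.4} pointwise. Your two auxiliary checks --- that $\tilde{z}\in\tilde{\Omega}\cap\mathbb{D}$ automatically lies in $\tilde{\Omega}\cap\tilde{\Omega}^*$, and that no regularity of $\tilde{F}$ is needed --- are both accurate and worth having made explicit.
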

As a consequence of this fact, we get 
\begin{fact}\label{split-holo and split-mero}
Let \(\tilde{F}: \tilde{\Omega} \rightarrow \mathbb{BC}\) be a bicomplex function that satisfies equation \eqref{5.4}. If \(\tilde{F}\) is bicomplex holomorphic (respectively, bicomplex meromorphic) on \(\tilde{\Omega} \subset \mathbb{BC}\), then \(\tilde{F}\bigr|_{\mathbb{D}}\) is split-holomorphic (respectively, split-meromorphic).
\end{fact}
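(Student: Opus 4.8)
The plan is to pass to the idempotent representation and reduce everything to a statement about the two ordinary complex components. Writing $\tilde{F}=G_1\mathbf{e}+G_2\mathbf{e}^{\dagger}$, Theorem \ref{differential thm} gives that $\tilde{F}$ is bicomplex holomorphic precisely when $G_1(\beta_1)$ and $G_2(\beta_2)$ are holomorphic in their respective variables. The first step is to translate the symmetry \eqref{5.4} into a statement about $G_1,G_2$. Since $\tilde{z}^{*}=\bar{\beta}_1\mathbf{e}+\bar{\beta}_2\mathbf{e}^{\dagger}$ and conjugation acts on the idempotent components by ordinary complex conjugation, evaluating both sides of \eqref{5.4} and using uniqueness of the idempotent representation yields
\begin{equation*}
G_1(\bar{\beta}_1)=\overline{G_1(\beta_1)},\qquad G_2(\bar{\beta}_2)=\overline{G_2(\beta_2)}.
\end{equation*}
In particular, for real arguments these force $G_1,G_2$ to be real-valued; this is the analytic content behind the preceding Fact (\cite{Kato}) that $\tilde{F}\bigr|_{\mathbb{D}}$ is $\mathbb{D}$-valued, and it also fixes the Taylor coefficients of $G_1,G_2$ to be real at every real base point.

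For the holomorphic case I would then restrict to $\mathbb{D}$. By \eqref{subham_3}, a point $\hat{z}=x_1+\mathbf{k}\,x_4$ has idempotent coordinates $\beta_1=x_1+x_4=:p$ and $\beta_2=x_1-x_4=:q$, both real, so that $\tilde{F}\bigr|_{\mathbb{D}}(\hat{z})=g_1(p)\mathbf{e}+g_2(q)\mathbf{e}^{\dagger}$ with $g_i:=G_i\bigr|_{\mathbb{R}}$ real-valued and real-analytic. Expanding $g_1(p)=\sum_n a_n (p-p_0)^n$ and $g_2(q)=\sum_n b_n (q-q_0)^n$ around the idempotent coordinates $p_0,q_0$ of a base point $\hat{z}_0$, with $a_n,b_n\in\mathbb{R}$, and using that $\mathbf{e},\mathbf{e}^{\dagger}$ are orthogonal idempotents so that $(\hat{z}-\hat{z}_0)^n=(p-p_0)^n\mathbf{e}+(q-q_0)^n\mathbf{e}^{\dagger}$, I can reassemble
\begin{equation*}
\tilde{F}\bigr|_{\mathbb{D}}(\hat{z})=\sum_{n}\bigl(a_n\mathbf{e}+b_n\mathbf{e}^{\dagger}\bigr)(\hat{z}-\hat{z}_0)^n,
\end{equation*}
a convergent split power series whose coefficients $a_n\mathbf{e}+b_n\mathbf{e}^{\dagger}$ lie in $\mathbb{D}$ because $a_n,b_n$ are real. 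Hence $\tilde{F}\bigr|_{\mathbb{D}}$ is split-analytic, i.e.\ split-holomorphic; alternatively, writing $\tilde{F}\bigr|_{\mathbb{D}}=u+\mathbf{k}\,v$ one checks directly that $u_{x_1}=v_{x_4}$ and $u_{x_4}=v_{x_1}$, which is immediate from the chain rule applied to $p=x_1+x_4$, $q=x_1-x_4$.

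For the meromorphic case my plan is to reduce to the holomorphic one by inverting. If $\tilde{F}$ is meromorphic with a pole at $\tilde{p}=p_1\mathbf{e}+p_2\mathbf{e}^{\dagger}$ (so $G_1,G_2$ have poles at $p_1,p_2$), then $1/\tilde{F}=(1/G_1)\mathbf{e}+(1/G_2)\mathbf{e}^{\dagger}$ is holomorphic near $\tilde{p}$, since an order-$m$ pole of $G_i$ becomes an order-$m$ zero of $1/G_i$. Moreover $1/\tilde{F}$ still satisfies \eqref{5.4}, because $*$ is a ring homomorphism: $(1/\tilde{F})(\tilde{z}^{*})=1/\tilde{F}(\tilde{z}^{*})=1/(\tilde{F}(\tilde{z}))^{*}=\bigl((1/\tilde{F})(\tilde{z})\bigr)^{*}$. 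Applying the holomorphic case to $1/\tilde{F}$ shows that $(1/\tilde{F})\bigr|_{\mathbb{D}}=1/(\tilde{F}\bigr|_{\mathbb{D}})$ is split-analytic, and therefore $\tilde{F}\bigr|_{\mathbb{D}}$ is split-meromorphic in the sense defined above.

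The main obstacle I anticipate is precisely this meromorphic step, for two reasons. First, one must honour the paper's restrictive definition of split-meromorphic as the reciprocal of a split-analytic function, so the reduction has to produce a genuine split-analytic $\hat{f}$ with $\tilde{F}\bigr|_{\mathbb{D}}=1/\hat{f}$, working locally around each pole and then patching, rather than simply invoking a "ratio of split-analytic functions". Second, and more delicate, is the zero-divisor pathology recorded in Remark \ref{remak for pole of split-meromorphic}: upon restriction to $\mathbb{D}$ a single pole propagates to the whole associated set $\hat{P}+S$, so $1/\hat{f}$ is only defined on the disconnected domain $\hat{\Omega}\setminus(\hat{P}+S)$, and split-meromorphy must be read off on each connected component. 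Keeping track of this, while ensuring that the bicomplex pole condition (both components of $\tilde{F}$ having poles) is the one that survives the restriction, is the part that needs the most care.
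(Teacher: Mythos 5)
Your proposal is correct and follows essentially the same route the paper intends: the paper states this Fact without proof, as a direct consequence of the preceding restriction fact of Kato, and your argument fills in exactly that chain --- idempotent decomposition via Theorem \ref{differential thm}, translation of \eqref{5.4} into conjugate-symmetry of $G_1,G_2$ (which is precisely the paper's Remark \ref{conjugate-symmetric}), restriction to the real idempotent coordinates as in \eqref{subham_3}, reassembly into a split power series, and inversion to handle the meromorphic case. Your closing caveats about poles spreading to $\hat{P}+S$ and the resulting disconnected domain are consistent with the paper's own treatment in Remark \ref{remak for pole of split-meromorphic}, so no genuine gap remains.
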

Now, we will recall the Weierstrass-Enneper representation for a maximal surface, which can be stated as follows:
\begin{theorem}\label{thm maximal}(\cite{Romero})
    Let $\Omega$ be an open subset of $\mathbb{C}(\mathbf{i})$, $g$ be a meromorphic function, and $f$ be a holomorphic function on $\Omega$ such that :
\begin{enumerate}
    \item $fg^2$ is holomorphic on $\Omega$.
    \item $\operatorname{Im}(g) \not\equiv 0 $ on $\Omega$,
    \item $ \operatorname{Re}\int_{\gamma} \left(\left(1-{g}^2\right),2g,\left(1+{g}^2\right)\right)f\,dz=0 $ for all closed loops $\gamma$ on $\Omega$.  
\end{enumerate}
    Then, the map $X:\Omega \rightarrow \mathbb{E}_1^3$ given by 
    \begin{equation}\label{maximal period condition}
        X(z)=  \operatorname{Re}\int_{z_0}^{z} \left(\left(1-g^2\right),2g,\left(1+g^2\right)\right) f\,dz
    \end{equation}
    defines a maximal surface with base point $z_0\in \Omega$, where $(g, f)$ is called the Weierstrass data of $X$.

\end{theorem}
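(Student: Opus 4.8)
The plan is to follow the classical route for a Weierstrass-type representation, but carried out throughout with the Lorentzian inner product $\langle a,b\rangle := a_1b_1+a_2b_2-a_3b_3$ of $\mathbb{E}^3_1$ rather than the Euclidean one. Write $z=u+\mathbf{i}\,v$ and set
\[
\phi:=\left(\left(1-g^2\right),2g,\left(1+g^2\right)\right)f=(\phi_1,\phi_2,\phi_3).
\]
First I would establish that $X$ is well defined. A priori the value $X(z)$ depends on the chosen path from $z_0$ to $z$; any two such paths differ by a closed loop $\gamma$ in $\Omega$, and the corresponding integrals differ by the period $\int_\gamma \phi\,dz$. Hypothesis (3) asserts precisely that the real part of every such period vanishes, so $\operatorname{Re}\int_{z_0}^{z}\phi\,dz$ is path-independent and $X\colon\Omega\to\mathbb{R}^3$ is a genuine, single-valued map.

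Next I would check that each component $\phi_k$ is holomorphic, so that $X$ is the real part of a holomorphic $\mathbb{C}(\mathbf{i})^3$-valued primitive. Here hypothesis (1) is used to absorb the poles of the meromorphic function $g$: since $\phi_1=f-fg^2$ and $\phi_3=f+fg^2$, both are holomorphic once $f$ and $fg^2$ are; and because $(fg)^2=f\cdot(fg^2)$ is a product of holomorphic functions it has no poles, which forces $fg$, and hence $\phi_2=2fg$, to be holomorphic as well. With $\phi$ holomorphic one computes $X_u=\operatorname{Re}\phi$ and $X_v=-\operatorname{Im}\phi$, so that $\phi=X_u-\mathbf{i}\,X_v$ and each coordinate of $X$ is harmonic, i.e. $X_{uu}+X_{vv}=0$.

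The geometric core of the argument is the algebraic identity
\[
\phi_1^2+\phi_2^2-\phi_3^2=f^2\big[(1-g^2)^2+4g^2-(1+g^2)^2\big]=0,
\]
which is the null (isotropy) condition for the Lorentzian metric. Since $\phi=X_u-\mathbf{i}\,X_v$, separating $\langle\phi,\phi\rangle=0$ into real and imaginary parts gives $\langle X_u,X_u\rangle=\langle X_v,X_v\rangle$ together with $\langle X_u,X_v\rangle=0$, i.e. the parametrization is conformal. The companion computation
\[
\langle X_u,X_u\rangle+\langle X_v,X_v\rangle=\langle\phi,\overline{\phi}\rangle=|\phi_1|^2+|\phi_2|^2-|\phi_3|^2=8|f|^2\big(\operatorname{Im}g\big)^2
\]
then shows that the induced metric equals $\lambda^2(du^2+dv^2)$ with $\lambda^2=4|f|^2(\operatorname{Im}g)^2\ge 0$. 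Hypothesis (2), $\operatorname{Im}(g)\not\equiv 0$, guarantees that $\lambda^2$ is strictly positive on a dense open subset, so the induced metric is Riemannian and $X$ is spacelike away from the singular locus $\{f=0\}\cup\{\operatorname{Im}g=0\}$.

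Finally, for a conformally parametrized spacelike surface the mean curvature vector is a nonzero multiple of the normal part of $X_{uu}+X_{vv}$; since $X$ is harmonic this quantity vanishes identically, and therefore $X$ has vanishing mean curvature and is a maximal surface with Weierstrass data $(g,f)$. I expect the main difficulty to be organizational rather than conceptual: one must track the poles of $g$ against the zeros of $f$ so that $\phi$ remains holomorphic and the degenerate points are correctly identified with the singular set, and one must consistently use the indefinite inner product of $\mathbb{E}^3_1$ at every stage, since it is exactly the minus sign in the third slot that both yields the null identity above and produces the spacelike expression $\lambda^2=4|f|^2(\operatorname{Im}g)^2$, thereby explaining the role of hypothesis (2).
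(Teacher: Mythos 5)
Your proof is correct and is the standard Weierstrass--Enneper argument: holomorphicity of $\phi=((1-g^2),2g,(1+g^2))f$ via hypothesis (1), single-valuedness of the real part via the period condition (3), the null identity $\phi_1^2+\phi_2^2-\phi_3^2=0$ giving conformality, the computation $\langle\phi,\overline{\phi}\rangle=8|f|^2(\operatorname{Im}g)^2$ giving spacelikeness where hypothesis (2) applies, and harmonicity forcing the mean curvature to vanish. The paper itself states this theorem without proof, citing the reference for it, and your argument matches that classical derivation (including the conformal factor $4|f|^2(\operatorname{Im}g)^2$, which agrees with the induced metric $h=4(\operatorname{Im}(g))^2|f|^2$ recorded in the paper), so there is nothing further to reconcile.
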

The third condition in Theorem \ref{thm maximal} is referred to as the \textit{period condition} for the maximal surface. The induced metric on $X$ coming from $\mathbb{E}^3_1$ is given by $h =4\left(\operatorname{Im}(g)\right)^2| f|^2$ and the set of points where the metric $h$ degenerates is $\{z \in \Omega: \operatorname{Im}g(z) = 0 \text{ or } f(z)=0\}$. The set $\operatorname{Sing}(X):=\{z \in \Omega:\operatorname{Im}g(z) = 0\}$ denotes the singularity set of $X$ and $\{z \in \Omega: f(z)=0\}$ is called the branch point set of $X$.

Now, we introduce the timelike minimal surface in $\mathbb{E}^3_1$:
\begin{definition}(\cite{Kim})
   Let $U$ be an open set in $\mathbb{R}^2$, and let $X:U \subset \mathbb{R}^2 \rightarrow \mathbb{E}^3_1$ be a $C^2$ map with the property that 
    \begin{equation}\label{conformal}
        \langle X_u,X_u \rangle =- \langle X_v,X_v \rangle, \quad \langle X_u,X_v \rangle =0.
    \end{equation}
    Then $X(u,v)$ is a timelike minimal surface if and only if 
    \begin{equation}\label{wave}
        X_{uu}-X_{vv}=0.
    \end{equation}
\end{definition}

 Similar to the case of minimal and maximal surfaces, Jerzy J. Konderak \cite{Konderak} has obtained the Weierstrass-Enneper type representation for timelike minimal surfaces using the theory of split-complex numbers. 
 \begin{theorem}\label{thm timelike}
    Let $\hat{\Omega}$ be an open subset of $\mathbb{D}$, $\hat{g}$ be a split-meromorphic function, and $\hat{f}$ be a split-holomorphic function on $\hat{\Omega}$ such that the following hold:
\begin{enumerate}
    \item  $\hat{f}\hat{g}^2$ is split-holomorphic on $\hat{\Omega}$,
    \item $\operatorname{Im}(\hat{g}) \not\equiv 0 $ on $\hat{\Omega}$,
    \item $ \operatorname{Re}\int_{\hat{\gamma}} \left(\left(1-{\hat{g}}^2\right),2\hat{g},\left(1+{\hat{g}}^2\right)\right)\hat{f}\,d\hat{z}=0 $ for all closed loops $\hat{\gamma}$ on $\hat{\Omega}$.
    \end{enumerate}Then 
    \begin{equation}{\label{timelike}}
        \hat{X}(\hat{z})=  \operatorname{Re}\int_{\hat{z}_0}^{\hat{z}} \left(\left(1-{\hat{g}}^2\right),2\hat{g},\left(1+{\hat{g}}^2\right)\right)\hat{f}\,d\hat{z};
    \end{equation}
    defines a timelike minimal surface with base point $\hat{z}_0\in \hat{\Omega}$, where $(\hat{g}, \hat{f})$ is called the Weierstrass data of $\hat{X}$.
\end{theorem}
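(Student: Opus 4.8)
The plan is to verify that the map $\hat X$ in \eqref{timelike} meets the two defining conditions of a timelike minimal surface---the conformality relations \eqref{conformal} and the wave equation \eqref{wave}---in the coordinates $u=\operatorname{Re}(\hat z)$, $v=\operatorname{Im}(\hat z)$, where $\hat z=u+\mathbf{k}\,v$. I would first check that the integrand is split-holomorphic. Writing $\phi_1=(1-\hat g^2)\hat f=\hat f-\hat f\hat g^2$, $\phi_3=(1+\hat g^2)\hat f=\hat f+\hat f\hat g^2$ and $\phi_2=2\hat g\hat f$, the components $\phi_1,\phi_3$ are split-holomorphic by condition (1). For $\phi_2=2\hat f\hat g$ I would pass to the bicomplex extension afforded by Fact \ref{split-holo and split-mero} and the idempotent decomposition of Theorem \ref{differential thm}, in which $\hat f$ and $\hat g$ become $F_1\mathbf{e}+F_2\mathbf{e}^{\dagger}$ and $G_1\mathbf{e}+G_2\mathbf{e}^{\dagger}$ with $F_i$ holomorphic and $G_i$ meromorphic in a single complex variable; the classical Weierstrass argument (a pole of $G_i$ of order $m$ forces $F_i$ to vanish to order $\ge 2m$, so $F_iG_i$ is holomorphic) then shows that $\hat f\hat g$, and hence $\phi_2$, is split-holomorphic. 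The period condition (3) makes the real part of the integral path-independent, so $\hat X$ is a well-defined $\mathbb{E}^3_1$-valued map on the component of $\hat\Omega$ containing $\hat z_0$.

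Next I would extract the first derivatives of $\hat X$. Writing $\phi_k=a_k+\mathbf{k}\,b_k$ and $d\hat z=du+\mathbf{k}\,dv$, the relation $\mathbf{k}^2=1$ gives $\operatorname{Re}(\phi_k\,d\hat z)=a_k\,du+b_k\,dv$; this $1$-form is closed exactly because the split-Cauchy--Riemann equations $a_{k,u}=b_{k,v}$ and $a_{k,v}=b_{k,u}$ hold, so together with (3) the components $X^1,X^2,X^3$ of $\hat X$ are genuine potentials with
\[
X^k_u=a_k=\operatorname{Re}(\phi_k),\qquad X^k_v=b_k=\operatorname{Im}(\phi_k).
\]
The heart of the argument is then the algebraic identity
\[
\phi_1^2+\phi_2^2-\phi_3^2=\hat f^2\big[(1-\hat g^2)^2+(2\hat g)^2-(1+\hat g^2)^2\big]=0,
\]
which holds identically because of the special shape of the Weierstrass triple. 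Since $\phi_k^2=(a_k^2+b_k^2)+2\mathbf{k}\,a_kb_k$, reading off the real and $\mathbf{k}$-parts of $\phi_1^2+\phi_2^2-\phi_3^2=0$ and using the previous display yields
\[
\langle \hat X_u,\hat X_u\rangle+\langle \hat X_v,\hat X_v\rangle=0,\qquad \langle \hat X_u,\hat X_v\rangle=0,
\]
which are precisely the conformality relations \eqref{conformal}. The wave equation follows at once from the split-Cauchy--Riemann equations: $X^k_{uu}=a_{k,u}=b_{k,v}=X^k_{vv}$, hence $\hat X_{uu}-\hat X_{vv}=0$.

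Finally I would confirm that the surface is genuinely timelike. Computing the conformal factor through the split modulus $|\cdot|^2_{\mathbb{D}}$, and using $\hat g-\bar{\hat g}=2\mathbf{k}\operatorname{Im}(\hat g)$ together with $\mathbf{k}^2=1$, a short calculation gives
\[
\langle \hat X_u,\hat X_u\rangle=-\langle \hat X_v,\hat X_v\rangle=-4\,(\operatorname{Im}\hat g)^2\,|\hat f|^2_{\mathbb{D}},
\]
so the induced metric is $-4(\operatorname{Im}\hat g)^2|\hat f|^2_{\mathbb{D}}\,(du^2-dv^2)$, which is Lorentzian off the degeneracy locus; condition (2) guarantees that this factor is not identically zero, so $\hat X$ is timelike on a dense open set, and by the cited definition it is a timelike minimal surface. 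I expect the real obstacle to lie not in these formal manipulations but in the analytic bookkeeping peculiar to the split setting: one must make sense of the integral of a split-holomorphic form on the disconnected domain $\hat\Omega\setminus(\hat P+S)$ (see Remark \ref{remak for pole of split-meromorphic}), justify that the integrand stays split-holomorphic across the poles of $\hat g$ via its idempotent components, and cope with the fact that $|\hat f|^2_{\mathbb{D}}$ is indefinite, so that the sign of the conformal factor---and hence which coordinate direction is timelike---may vary over $\hat\Omega$. Fact \ref{split-holo and split-mero} and the idempotent decomposition are the tools I would use to reduce each of these points to the corresponding, already understood, statements for holomorphic and meromorphic functions of one complex variable.
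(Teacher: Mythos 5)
Your verification is correct and follows the standard route: the identity $\phi_1^2+\phi_2^2-\phi_3^2=0$ for the split-valued Weierstrass triple gives the conformality relations \eqref{conformal}, the split-Cauchy--Riemann equations give the wave equation \eqref{wave}, and the conformal factor computes to $-4(\operatorname{Im}\hat g)^2|\hat f|^2_{\mathbb{D}}$. Note that the paper itself offers no proof of this statement---it is quoted as Konderak's theorem from \cite{Konderak}---and your argument is essentially the proof given there; the only loose phrasing is in the split-holomorphicity of $\hat f\hat g$, where the idempotent components of functions on $\mathbb{D}$ are real-analytic functions of a real variable (not holomorphic functions of a complex variable) unless one first passes to a bicomplex extension, but the order-of-vanishing argument works verbatim in that setting too.
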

      The third condition in Theorem \ref{thm timelike} is called the \textit{period condition} for the timelike minimal surface $\hat{X}$. The induced metric on $\hat{X}$ coming from $\mathbb{E}^3_1$ is given by $\hat{h}=-4\left(\operatorname{Im}(\hat{g})\right)^2|\hat{f}|_{\mathbb{D}}^2$. The set of points where the metric $\hat{h}$ degenerates is $\{\hat{z} \in \hat{\Omega}: \operatorname{Im}\hat{g}(\hat{z}) = 0 \text{ or } \hat{f}(\hat{z})=0\}$. The set $\operatorname{Sing}(\hat{X}):=\{\hat{z} \in \hat{\Omega}:\operatorname{Im}\hat{g}(\hat{z}) = 0\}$, denotes the singularity set of $\hat{X}$ and $\{\hat{z} \in \hat{\Omega}: \hat{f}(\hat{z})=0\}$ the branch point set of $\hat{X}$. The induced metric on $\hat{X}$ coming from $\mathbb{E}^3$ is given by
 \begin{equation}\label{weak complete}  \hat{h}_{\mathbb{E}^3}=2\left(1+|\hat{g}|_{\mathbb{D}}^2\right)^2|\hat{f}|_{\mathbb{D}}^2|d {\hat{z}}|_{\mathbb{D}}^2 + \operatorname{ Re} \{\left(1+\hat{g}^2\right)^2\hat{f}^2 d {\hat{z}}^2\}.
     \end{equation}
\begin{definition}\label{def weak complete}
    We say a timelike minimal surface $\hat{X}$ is $\mathbb{E}^3$-complete\footnote{Senchun Lin and Tilla Weinstein first used this term in \cite{lin19973}.} or weak complete\footnote{This term was first used in the context of maximal surface by Masaaki Umehara and Kotaro Yamada in \cite{umehara2006maximal}.} if it is complete with respect to the metric $\hat{h}_{\mathbb{E}^3}$.

\end{definition}
For more details about timelike minimal surfaces, one can check  \cite{Inoguchi} and \cite{Erdem}. 

Now, we define the end of a timelike minimal surface similar to how Taishi Imaizumi defined it for a maximal surface in \cite{Imaizumi}. 

\begin{definition}\label{End definition}
   Let $\hat{\Omega} \subset \mathbb{D}$ be an open set, $\hat{g}$ be a split-meromorphic function, and $\hat{f}$ be a split-holomorphic function defined on $\hat{\Omega}$. Also, let $\hat{P}$ be the set of poles of $\hat{\alpha}=\left(\left(1-{\hat{g}}^2\right),2\hat{g},\left(1+{\hat{g}}^2\right)\right)\hat{f}$. Now, if the map $\hat{X}: \hat{\Omega}^*= \hat{\Omega} \backslash (\hat{P}+S) \rightarrow \mathbb{E}^3_1$, defined by equation \eqref{timelike} is a timelike minimal surface with Weierstrass data $(\hat{g}\bigr|_{\hat{\Omega}^*}, \hat{f}\bigr|_{\hat{\Omega}^*})$, then we define a point $\hat{p} \in \hat{P}$ to be an end of $\hat{X}$. Also, we say $\hat{p}$ is a weak complete end if $\hat{X}$ is weak complete in a neighbourhood around $\hat{p}$.
\end{definition}
Here, note that the ends of a timelike minimal surface are quite different from the ends of a maximal surface and a minimal surface.
 As we can see from Remark \ref{remak for pole of split-meromorphic}, if $\hat{X}$ has ends, then defining the period condition on $\hat{\Omega}\backslash (\hat{P}+S)$ is difficult. This makes studying timelike minimal surfaces with ends nontrivial. We have addressed this problem in section \ref{section: Timelike minimal surface from bicomplex surface} using the theory of bicomplex numbers.

Kaname Hashimoto and Shin Kato were the first to present the bicomplex extension of zero mean curvature surfaces in $\mathbb{E}^3_1$ (see section $4$ of \cite{Kato}). For simplicity, we will use the term ``bicomplex surface" instead of ``bicomplex extension of zero mean curvature surface." Their representation unfolds as follows:

Any bicomplex surface can be locally represented as 

 \begin{equation}\label{eq For BC}
\tilde{\Phi}(\tilde{z})= \int^{\tilde{z}}_{\tilde{z}_0} \left(\left(1-\tilde{g}^2\right),2\tilde{g},\left(1+\tilde{g}^2\right)\right) \tilde{f}\,d\tilde{z}
\end{equation}
where, $\tilde{f}$ is a bicomplex holomorphic
function and $\tilde{g}$ is a bicomplex meromorphic function. 
Suppose that $(\tilde{g},\tilde{f})$ satisfies the equation \eqref{5.4}, then using the fact $\ref{split-holo and split-mero}$ we get the projection $\operatorname{Re}{\tilde{\Phi}\bigr|_{\mathbb{D}}}$ is minimal timelike immersion from $\tilde{\Omega} \cap \mathbb{D}$ into $\mathbb{E}^3_1$ with Weierstrass data ($\tilde{g}\bigr|_{\mathbb{D}},\tilde{f}\bigr|_{\mathbb{D}}$).

    \begin{remark}\label{split-harmonic}(\cite{Kato})
     The $\operatorname{Re}{\tilde{\Phi}\bigr|_{\mathbb{D}}}$ is a split-harmonic map
with respect to the metric $dx_1^2-dx_4^2$
on $\tilde{\Omega} \cap \mathbb{D}$.
\end{remark}
A complex-valued function \( G \) of a complex variable \( \beta \) is called \textit{conjugate-symmetric} if it satisfies the condition \( G(\bar{\beta}) = \overline{G(\beta)} \).
\begin{remark}\label{conjugate-symmetric}
 A bicomplex function \( \tilde{h}(\tilde{z}) = G_1(\beta_1)\mathbf{e} + G_2(\beta_2)\mathbf{e}^\dagger \) satisfies equation \eqref{5.4} if and only if both \( G_1 \) and \( G_2 \) are conjugate-symmetric.
\end{remark}

    \section{ Timelike minimal surface from bicomplex surface} \label{section: Timelike minimal surface from bicomplex surface}
    In \cite{McNertney}, it is shown that any timelike minimal surface can be locally expressed as a sum of two null curves. Motivated by this idea, we aim to construct a bicomplex surface as a sum of two maximal surfaces, as detailed in this section. Furthermore, by restricting the domain of a bicomplex surface to the split-complex plane, we obtain a timelike minimal surface.

The following lemma can be easily verified:

\begin{lemma}\label{lemma of BC}
    Let $\tilde{z} = \beta_1 \mathbf{e} + \beta_2 \mathbf{e}^\dagger \in \mathbb{BC}$, then $\operatorname{Re} \tilde{z} = \frac{1}{2} (\operatorname{Re} \beta_1 + \operatorname{Re} \beta_2)$.
\end{lemma}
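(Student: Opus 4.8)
The plan is to use the idempotent representation of bicomplex numbers together with the definition of the real part, and reduce the claim to a direct computation.

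First I would recall from the Preliminaries that any $\tilde{z} \in \mathbb{BC}$ can be written in two ways: in the standard basis as $\tilde{z} = x_1 + \mathbf{i}\,x_2 + \mathbf{j}\,x_3 + \mathbf{k}\,x_4$, and in the idempotent form as $\tilde{z} = \beta_1 \mathbf{e} + \beta_2 \mathbf{e}^\dagger$, where $\beta_1 = z_1 - \mathbf{i}\,z_2$ and $\beta_2 = z_1 + \mathbf{i}\,z_2$ with $z_1 = x_1 + \mathbf{i}\,x_2$ and $z_2 = x_3 + \mathbf{i}\,x_4$. The key observation is that the real part $\operatorname{Re}(\tilde{z}) = x_1$ is, by definition, the coefficient of $1$ in the standard basis, which equals $\operatorname{Re}(z_1)$.

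The main step is then to express $\operatorname{Re}(z_1)$ in terms of $\beta_1$ and $\beta_2$. Since $\beta_1 = z_1 - \mathbf{i}\,z_2$ and $\beta_2 = z_1 + \mathbf{i}\,z_2$, adding these gives $\beta_1 + \beta_2 = 2 z_1$, so $z_1 = \tfrac{1}{2}(\beta_1 + \beta_2)$. Taking real parts (recalling that $\beta_1, \beta_2 \in \mathbb{C}(\mathbf{i})$, so $\operatorname{Re}$ here is the usual complex real part with respect to $\mathbf{i}$) yields
\begin{equation*}
\operatorname{Re}(z_1) = \tfrac{1}{2}\bigl(\operatorname{Re}(\beta_1) + \operatorname{Re}(\beta_2)\bigr).
\end{equation*}
Combining this with $\operatorname{Re}(\tilde{z}) = \operatorname{Re}(z_1)$ gives the desired identity $\operatorname{Re}(\tilde{z}) = \tfrac{1}{2}(\operatorname{Re}(\beta_1) + \operatorname{Re}(\beta_2))$.

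I do not anticipate any genuine obstacle here, since the statement is essentially unwinding two coordinate representations of the same number; the only point requiring mild care is keeping track of which real part is meant (the bicomplex $\operatorname{Re}$ extracting $x_1$ versus the complex $\operatorname{Re}$ on $\mathbb{C}(\mathbf{i})$), and verifying that these are consistent via the relation $z_1 = \tfrac{1}{2}(\beta_1 + \beta_2)$. This is precisely why the lemma is stated as one that "can be easily verified."
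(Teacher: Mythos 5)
Your proof is correct: the paper offers no proof (it states the lemma ``can be easily verified''), and your direct computation via $z_1 = \tfrac{1}{2}(\beta_1+\beta_2)$ and $\operatorname{Re}(\tilde{z}) = \operatorname{Re}(z_1)$ is exactly the straightforward unwinding of the idempotent representation that the paper intends. No gaps; the bookkeeping of the two notions of $\operatorname{Re}$ is handled properly.
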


\begin{remark}\label{lemma of curve}
    If $\tilde{\gamma}:[a,b] \rightarrow \mathbb{BC}$ be a curve, then $\tilde{\gamma}(t)=\gamma_1(t) \mathbf{e} + \gamma_2(t) \mathbf{e}^\dagger;  \forall t\in [a,b]$, where $\gamma_1(t) $ and $\gamma_2(t)$ are curves in $\mathbb{C}(\mathbf{i})$. Furthermore, $\tilde{\gamma}$ is a loop  if and only if $\gamma_1(t) $ and $\gamma_2(t)$ are loops.
\end{remark}


Let $\tilde {g}$ be a bicomplex meromorphic function and $\tilde {f}$ be a bicomplex holomorphic function defined on $\tilde {\Omega} \subset \mathbb{BC}$. If for all loops $\tilde{\gamma}$ in $\tilde{\Omega} \backslash \tilde{P}$
\begin{equation}
\label{subh01}
    \operatorname{Re}\int_{\tilde{\gamma}} \left(\left(1-\tilde{g}^2\right), 2\tilde{g}, \left(1+\tilde{g}^2\right)\right) \tilde{f} \, d\tilde{z} = 0
\end{equation}
 where $\tilde{P}$ is the set of poles of $\left(\left(1-\tilde{g}^2\right), 2\tilde{g}, \left(1+\tilde{g}^2\right)\right) \tilde{f}$.  We call equation \eqref{subh01} the ``real period condition for bicomplex surfaces''. Furthermore, if $(\tilde{g}, \tilde{f})$ satisfies the condition given in the equation  \eqref{5.4}, then using the Fact $\ref{split-holo and split-mero}$ and the equation \eqref{eq For BC}, $\hat{X} := \operatorname{Re}(\tilde{\Phi}(\tilde{z}))\bigr|_{\mathbb{D}}$ gives a timelike minimal surface on $(\tilde{\Omega} \backslash \tilde{P}) \cap \mathbb{D}$ with the Weierstrass data given by $(\tilde{g}\bigr|_{\mathbb{D}}, \tilde{f}\bigr|_{\mathbb{D}})$.

 \begin{remark}
  It is important to note that a timelike minimal surface can be obtained without requiring \((\tilde{g}, \tilde{f})\) to satisfy the condition specified in equation \eqref{5.4}. By utilizing Remark \ref{split-harmonic}, we observe that all three components of \(\operatorname{Re}(\tilde{\Phi}(\tilde{z}))\bigr|_{\mathbb{D}}\) are split-harmonic, i.e., it satisfies equation \eqref{wave}. Also, the parametrization \(\tilde{\Phi}\) satisfies equation \eqref{conformal}. As a result, \(\hat{X} = \operatorname{Re}(\tilde{\Phi}(\tilde{z}))\bigr|_{\mathbb{D}}\) defines a timelike minimal surface on \((\tilde{\Omega} \backslash \tilde{P}) \cap \mathbb{D}\). However, this approach causes the loss of the Weierstrass data for our timelike minimal surface because now \((\tilde{g}\bigr|_{\mathbb{D}}, \tilde{f}\bigr|_{\mathbb{D}})\) are surfaces from \(\mathbb{D}\) to \(\mathbb{BC}\).

 \end{remark}
Next, we find a suitable \((\tilde{g}, \tilde{f})\) such that the real period condition given in equation \eqref{subh01} satisfies.

Using the period condition for maximal surface, we solve the real period condition for bicomplex surfaces given in equation \eqref{subh01}. We assume $X_k: \Omega_k \subset \mathbb{C}(\mathbf{i}) \rightarrow \mathbb{E}_1^3$ are maximal surfaces with Weierstrass data $(g_k, f_k) (k=1, 2)$ that satisfy the period condition for maximal surfaces given in the equation \eqref{maximal period condition}. To define a bicomplex surface, we take $\tilde{g} := g_1 \mathbf{e} + g_2 \mathbf{e}^{\dagger}$ as a bicomplex meromorphic function and $\tilde{f} := f_1 \mathbf{e} + f_2 \mathbf{e}^{\dagger}$ as a bicomplex holomorphic function on $\tilde{\Omega} = \Omega_1 \mathbf{e} + \Omega_2 \mathbf{e}^{\dagger} \subset \mathbb{BC}$. Now, if we substitute the idempotent expressions of $\tilde{g}$ and $\tilde{f}$ in equation \eqref{subh01} and then using the Lemma \ref{lemma of BC} and Remark \ref{lemma of curve} along with the properties of $\mathbf{e},\,\mathbf{e}^{\dagger}$, as mentioned earlier (see equation \eqref{def_of_e}), the real period condition of bicomplex surface splits into period condition of maximal surfaces $X_1$ and $X_2$ as follows:

\begin{eqnarray*}
\lefteqn{ \operatorname{Re}\int_{\tilde{\gamma}} \left(\left(1-\tilde{g}^2\right), 2\tilde{g}, \left(1+\tilde{g}^2\right)\right) \tilde{f} \, d\tilde{z} = } \\
& & \frac{1}{2}\left\{\operatorname{Re}\int_{\gamma_1} \left(\left(1-g_1^2\right), 2g_1, \left(1+g_1^2\right)\right) f_1 \, d\beta_1 + \operatorname{Re}\int_{\gamma_2} \left(\left(1-g_2^2\right), 2g_2, \left(1+g_2^2\right)\right) f_2 \, d\beta_2\right\} = 0,
\end{eqnarray*}
for any loop $\tilde{\gamma} = \gamma_1 \mathbf{e} + \gamma_2 \mathbf{e}^{\dagger}$ in $\Omega_1 \mathbf{e} + \Omega_2 \mathbf{e}^{\dagger} \subset \mathbb{BC}$.

This shows that $\operatorname{Re}(\tilde{\Phi})$ (see equation \eqref{eq For BC}) is well-defined even if $\tilde{\Phi}$ is not well-defined, and $\operatorname{Re}{\tilde{\Phi}\bigr|_{\mathbb{D}}}$ is a timelike minimal surface. Moreover, if $(g_k, f_k)$ (for $k = {1, 2}$) are conjugate-symmetric as mentioned in Remark $\ref{conjugate-symmetric}$, then the Weierstrass data of the timelike minimal surface is given by $(\tilde{g}\bigr|_{\mathbb{D}}, \tilde{f}\bigr|_{\mathbb{D}})$.

Now, we can state the following proposition:
\begin{prop} \label{sum of two maximal surfaces}
    Let $X_1$ and $X_2$ be the maximal surfaces with Weierstrass data $(g_1,f_1)$ and $(g_2,f_2)$, and $\tilde{g}:= g_1 \mathbf{e} + g_2 \mathbf{e}^{\dagger}$ and $\tilde{f} := f_1 \mathbf{e} + f_2 \mathbf{e}^{\dagger}$  both are satisfying \eqref{5.4}. Then,
$\hat{X}=\frac{1}{2}(X_1+X_2) \bigr|_{\mathbb{D}}$ is a timelike minimal surface with the Weierstrass data $(\tilde{g}\bigr|_{\mathbb{D}},\tilde{f}\bigr|_{\mathbb{D}})$.
\end{prop}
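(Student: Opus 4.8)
The plan is to obtain this proposition from the construction carried out in the discussion immediately preceding it, the one genuinely new ingredient being the explicit identification of the restricted bicomplex surface with the average $\frac{1}{2}(X_1+X_2)$. First I would assemble the bicomplex data $\tilde{g}=g_1\mathbf{e}+g_2\mathbf{e}^\dagger$ and $\tilde{f}=f_1\mathbf{e}+f_2\mathbf{e}^\dagger$ on $\tilde{\Omega}=\Omega_1\mathbf{e}+\Omega_2\mathbf{e}^\dagger$. Since each $g_k$ is meromorphic and each $f_k$ is holomorphic, Theorem \ref{differential thm} together with its meromorphic analogue guarantees that $\tilde{f}$ is bicomplex holomorphic and $\tilde{g}$ is bicomplex meromorphic, so the bicomplex surface $\tilde{\Phi}$ of equation \eqref{eq For BC} is defined.

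Next I would compute $\tilde{\Phi}$ in the idempotent basis. Using $\mathbf{e}^2=\mathbf{e}$, $(\mathbf{e}^\dagger)^2=\mathbf{e}^\dagger$, $\mathbf{e}\mathbf{e}^\dagger=0$ and $\mathbf{e}+\mathbf{e}^\dagger=1$, the integrand factors as $\left(\left(1-\tilde{g}^2\right),2\tilde{g},\left(1+\tilde{g}^2\right)\right)\tilde{f}=\alpha_1\mathbf{e}+\alpha_2\mathbf{e}^\dagger$, where $\alpha_k=\left(\left(1-g_k^2\right),2g_k,\left(1+g_k^2\right)\right)f_k$ is precisely the Weierstrass integrand of $X_k$. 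Since $d\tilde{z}=d\beta_1\mathbf{e}+d\beta_2\mathbf{e}^\dagger$ and the idempotents are orthogonal, the bicomplex line integral splits as $\tilde{\Phi}=\Phi_1\mathbf{e}+\Phi_2\mathbf{e}^\dagger$, with $\Phi_k=\int\alpha_k\,d\beta_k$ and $X_k=\operatorname{Re}\Phi_k$. The real period condition \eqref{subh01} is then disposed of exactly as in the display preceding the statement: it separates into the two maximal period conditions, both of which hold by hypothesis, so $\operatorname{Re}\tilde{\Phi}$ is path-independent and well-defined even though $\tilde{\Phi}$ itself need not be.

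The decisive step is to apply Lemma \ref{lemma of BC} componentwise to $\tilde{\Phi}=\Phi_1\mathbf{e}+\Phi_2\mathbf{e}^\dagger$, giving $\operatorname{Re}\tilde{\Phi}=\frac{1}{2}\left(\operatorname{Re}\Phi_1+\operatorname{Re}\Phi_2\right)=\frac{1}{2}(X_1+X_2)$; the factor $\frac{1}{2}$ is forced precisely by that lemma. Restricting to $\mathbb{D}$ yields $\hat{X}=\operatorname{Re}(\tilde{\Phi})\bigr|_{\mathbb{D}}=\frac{1}{2}(X_1+X_2)\bigr|_{\mathbb{D}}$. That $\hat{X}$ is a timelike minimal immersion follows from the general bicomplex-surface theory: by Remark \ref{split-harmonic} its three components are split-harmonic, i.e.\ satisfy the wave equation \eqref{wave}, while the parametrization satisfies the conformality relations \eqref{conformal}. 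Finally, since $\tilde{g}$ and $\tilde{f}$ satisfy \eqref{5.4} by hypothesis, Fact \ref{split-holo and split-mero} shows that $\tilde{g}\bigr|_{\mathbb{D}}$ is split-meromorphic and $\tilde{f}\bigr|_{\mathbb{D}}$ is split-holomorphic, so these are genuine Weierstrass data for $\hat{X}$.

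I expect the main obstacle to be the bicomplex bookkeeping in the middle step: checking that both the integrand and the line integral respect the idempotent splitting so that $\tilde{\Phi}$ really equals $\Phi_1\mathbf{e}+\Phi_2\mathbf{e}^\dagger$, and that the real period condition collapses cleanly onto the two scalar conditions. A secondary point worth recording is the non-degeneracy needed for $\hat{X}$ to be genuinely timelike rather than degenerate, namely $\operatorname{Im}(\tilde{g}\bigr|_{\mathbb{D}})\not\equiv0$, which on the split-complex locus amounts to $g_1\not\equiv g_2$.
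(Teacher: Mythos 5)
Your proposal is correct and follows essentially the same route as the paper: the idempotent splitting of the integrand, the measure $d\tilde{z}$, and the real period condition \eqref{subh01} into the two maximal period conditions, Lemma \ref{lemma of BC} (with Remark \ref{lemma of curve}) to produce the factor $\frac{1}{2}$ in $\operatorname{Re}\tilde{\Phi}=\frac{1}{2}(X_1+X_2)$, and Fact \ref{split-holo and split-mero} via hypothesis \eqref{5.4} to identify $(\tilde{g}|_{\mathbb{D}},\tilde{f}|_{\mathbb{D}})$ as genuine split-complex Weierstrass data. The only slip is in your closing side remark: $\operatorname{Im}(\tilde{g}|_{\mathbb{D}})\equiv 0$ forces $g_1(a)=g_2(b)$ for all real $a,b$, hence forces $g_1$ and $g_2$ to be one and the same real constant, so non-degeneracy is strictly weaker than $g_1\not\equiv g_2$ (e.g.\ $g_1\equiv g_2$ nonconstant is still fine) --- but this observation is tangential and the paper leaves the corresponding condition implicit as well.
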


\begin{example} \label{example}
    Let \( g_1(\beta_1) = -\beta_1 \), \( g_2(\beta_2) = -\beta_2 \), \( f_1(\beta_1) = -\frac{1}{{\beta_1}^2} \), and \( f_2(\beta_2) = -\frac{1}{{\beta_2}^2} \). From equation \eqref{maximal period condition}, we obtain 
    \[
    X_1 = \left( \frac{a_1}{a_1^2 + b_1^2} + a_1, \ln(a_1^2 + b_1^2), \frac{a_1}{a_1^2 + b_1^2} - a_1 \right),
    \]
    and 
    \[
    X_2 = \left( \frac{a_2}{a_2^2 + b_2^2} + a_2, \ln(a_2^2 + b_2^2), \frac{a_2}{a_2^2 + b_2^2} - a_2 \right),
    \]
    where \( \beta_1 = a_1 + \mathbf{i} b_1 = (x_1 + x_4) + \mathbf{i} (x_2 - x_3) \) and \( \beta_2 = a_2 + \mathbf{i} b_2 = (x_1 - x_4) + \mathbf{i} (x_2 + x_3)  \).

    Now, using Proposition \ref{sum of two maximal surfaces}, we compute 
    \[
    \hat{X} = \frac{1}{2} (X_1 + X_2) \Big|_{\mathbb{D}} = \frac{1}{2} \left( \frac{1}{a_1} + a_1 + \frac{1}{a_2} + a_2, \ln(a_1^2) + \ln(a_2^2), \frac{1}{a_1} - a_1 + \frac{1}{a_2} - a_2 \right).
    \]
    Further simplification yields
    \[
    \hat{X}(x_1, x_4) = \left( \frac{x_1}{x_1^2 - x_4^2} + x_1, \ln\left| x_1^2 - x_4^2 \right|, \frac{x_1}{x_1^2 - x_4^2} - x_1 \right),
    \]
    which defines a timelike minimal surface on \( \mathbb{D} \backslash S \).
\end{example}


\section {Singularities of timelike minimal surfaces}\label{section: Singularities of timelike minimal surface}
Let $X_1$ and $X_2$ be two maximal surfaces as introduced in section \ref{section: Timelike minimal surface from bicomplex surface} with their Weierstrass data $(g_1,\,f_1)$ and $(g_2,\,f_2)$ respectively such that the functions $g_k,\,f_k (k=1,2)$ are conjugate symmetric. Therefore, $\tilde{g}=g_1\mathbf{e}+g_2\mathbf{e}^{\dagger}$ is a bicomplex meromorphic function and  $\tilde{f}=f_1\mathbf{e}+f_2\mathbf{e}^{\dagger}$ is a bicomplex holomorphic function 
defined on $\tilde{\Omega}=\Omega_1 \mathbf{e} + \Omega_2 \mathbf{e}^{\dagger} \subset \mathbb{BC}$ and they satisfy the equation \eqref{5.4}. So, $\hat{X}= \frac{1}{2}(X_1+X_2) \bigr|_{\mathbb{D}}$ becomes a timelike minimal surface on $\hat{\Omega}=\tilde{\Omega} \cap \mathbb{D}$ with Weierstrass data $(\tilde{g}\bigr|_{\mathbb{D}} , \tilde{f}\bigr|_{\mathbb{D}})$ (see Proposition \ref{sum of two maximal surfaces}). Also, if $\operatorname{Sing}({X}_1)$ and $\operatorname{Sing}({X}_2)$ are the sets of all singular points of $X_1$ and $X_2$, respectively, then we have the following lemma. 
\begin{lemma}
\label{newlemma}
    $\hat{\Omega}\subset\operatorname{Sing}({X}_1) \mathbf{e} + \operatorname{Sing}({X}_2) \mathbf{e}^\dagger$.
\end{lemma}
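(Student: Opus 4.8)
The plan is to unwind both sides of the claimed inclusion through the idempotent coordinates and then exploit the conjugate-symmetry hypothesis on $g_1,g_2$. First I would take an arbitrary point $\hat{z}\in\hat{\Omega}=\tilde{\Omega}\cap\mathbb{D}$ and write it in its idempotent form. By equation \eqref{subham_3}, every split-complex number has the representation $\hat{z}=a_1\mathbf{e}+a_2\mathbf{e}^{\dagger}$ with $a_1=\operatorname{Re}(\beta_1)$ and $a_2=\operatorname{Re}(\beta_2)$ both \emph{real}; this reality of the two idempotent components is the structural fact that drives the whole argument.

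Next I would record membership in the domain. Since $\tilde{\Omega}=\Omega_1\mathbf{e}+\Omega_2\mathbf{e}^{\dagger}$ and the idempotent decomposition is unique, the condition $\hat{z}\in\tilde{\Omega}$ forces the components to lie in the respective factors, i.e. $a_1\in\Omega_1$ and $a_2\in\Omega_2$. Thus each $a_k$ is a legitimate point in the domain of the corresponding maximal surface $X_k$, which is exactly what is needed before one can even ask whether $a_k$ is a singular point.

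The heart of the argument is then to show $a_k\in\operatorname{Sing}(X_k)$, which, by the definition preceding Theorem \ref{thm maximal}, means $\operatorname{Im}g_k(a_k)=0$. Here I invoke the hypothesis that $g_k$ is conjugate-symmetric, $g_k(\bar{\beta})=\overline{g_k(\beta)}$ (Remark \ref{conjugate-symmetric}, which is equivalent to $(\tilde g,\tilde f)$ satisfying \eqref{5.4}). Since $a_k$ is real we have $\bar{a}_k=a_k$, whence $g_k(a_k)=g_k(\bar{a}_k)=\overline{g_k(a_k)}$, forcing $g_k(a_k)$ to be real and $\operatorname{Im}g_k(a_k)=0$. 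Therefore $a_k\in\operatorname{Sing}(X_k)$ for $k=1,2$, and reassembling the two components yields $\hat{z}=a_1\mathbf{e}+a_2\mathbf{e}^{\dagger}\in\operatorname{Sing}(X_1)\mathbf{e}+\operatorname{Sing}(X_2)\mathbf{e}^{\dagger}$, which is the desired inclusion.

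I do not anticipate a genuine obstacle here; the argument is essentially a bookkeeping of the idempotent coordinates combined with one application of conjugate symmetry. The only subtlety worth stating carefully is that restriction to $\mathbb{D}$ is precisely what makes both idempotent components real, so that conjugate symmetry can be applied; without this reality the conclusion $\operatorname{Im}g_k=0$ would fail. It is also worth emphasizing that $g_k$ need not vanish on the real locus of $\Omega_k$—only its imaginary part must, which is exactly what conjugate symmetry delivers—so the inclusion is in general strict, matching the intuition that the singular set of $\hat X$ sits over the real loci of the two generating maximal surfaces.
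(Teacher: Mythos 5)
Your proposal is correct and follows essentially the same route as the paper's own proof: write $\hat{z}\in\hat{\Omega}$ in idempotent form, use equation \eqref{subham_3} to see the two components are real, and apply conjugate-symmetry of $g_1,g_2$ to conclude $\operatorname{Im}g_k=0$ at those points. The additional bookkeeping you include (component membership in $\Omega_k$, the remark that the inclusion is generally strict) is fine but not needed beyond what the paper records.
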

\begin{proof}
    The functions $g_1, g_2$ are conjugate symmetric. If $\hat{z}= a\mathbf{e} + b\mathbf{e}^\dagger\in\hat{\Omega}$, then from equation \eqref{subham_3} it follows that $a,\,b$ are real. This implies $g_1(a)=\overline{g_1(a)}$ and $g_2(b)=\overline{g_2(b)}$. Therefore, $\operatorname{Im}(g_1(a))=0$ and $\operatorname{Im}(g_2(b))=0$. Hence, the claim.
\end{proof}

\begin{remark}
   If \,$\operatorname{Sing}({X}_1)\subset\mathbb{R}$ and\, $\operatorname{Sing}({X}_2)\subset\mathbb{R}$, then $\hat{\Omega}= \operatorname{Sing}({X}_1) \mathbf{e} + \operatorname{Sing}({X}_2) \mathbf{e}^\dagger$. 
\end{remark}
Now, recall $\operatorname{Sing}(\hat{X})= \{\hat{z}=a\mathbf{e}+b\mathbf{e}^{\dagger}\in \hat{\Omega}: \operatorname{Im}(\hat{g}) = 0\}$. Therefore, using the expressions of $\mathbf{e}, \mathbf{e}^{\dagger}$ from equation \eqref{def_of_e} and the fact that $\hat{g}=\tilde{g}\bigr|_{\mathbb{D}}$, we see $x_1+\mathbf{k}\,x_4=a\mathbf{e}+b\mathbf{e}^{\dagger}\in \operatorname{Sing}(\hat{X})$ if and only if $g_1(a)=g_2(b)$ and $a=x_1+x_4,\, b=x_1-x_4$. Hence, we write
\begin{equation}
  \operatorname{Sing}(\hat{X})= \{x_1+\mathbf{k}\,x_4\in\hat{\Omega}:g_1(x_1+x_4) = g_2(x_1-x_4)\}.  
\end{equation} 
From above it is clear that $\operatorname{Sing}(\hat{X})$ is a closed disconnected subset of $$\hat{\Omega} \backslash \left(\bigcup_{i=1}^m\{x_1+\mathbf{k}\,x_4\in\hat{\Omega}:x_1+x_4=a_i\}\cup \bigcup_{j=1}^n\{x_1+\mathbf{k}\,x_4\in\hat{\Omega}:x_1-x_4=b_j\}\right)=\chi\,(\text{say}),$$ where $a_1,\ldots, a_m$ are the poles of $g_1$ in $\Omega_1\cap\mathbb{R}$  and $b_1,\ldots, b_n$  are the poles of $g_2$ in $\Omega_2\cap\mathbb{R}$.  Now we have the following proposition:
\begin{prop}
\label{Compactness_singularity}
    If $\operatorname{Sing}({X}_1)\subset A_1 \subset \Omega_1$ and $\operatorname{Sing}({X}_2) \subset A_2 \subset \Omega_2$, where $A_1, A_2$ are compact subsets in $\mathbb{C}(\mathbf{i})$ such that $A_1$ and $A_2$ do not contain any pole of $g_1$ and $g_2$ respectively, then $\operatorname{Sing}(\hat{X})$ will be compact.
\end{prop}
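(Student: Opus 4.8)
The plan is to realize $\operatorname{Sing}(\hat{X})$ as the homeomorphic image of the zero set of a continuous real-valued function restricted to a compact set. Since a closed subset of a compact set is compact and a homeomorphism preserves compactness, this would finish the proof; the whole point is to set up coordinates in which the hypotheses confine everything to such a compact set.

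First I would fix coordinates. Every point of $\hat{\Omega}$ has the form $\hat{z}=a\mathbf{e}+b\mathbf{e}^{\dagger}$ with $a,b\in\mathbb{R}$, and the assignment $\phi(a,b)=a\mathbf{e}+b\mathbf{e}^{\dagger}=\frac{a+b}{2}+\mathbf{k}\,\frac{a-b}{2}$ is a linear homeomorphism of $\mathbb{R}^2$ onto $\mathbb{D}$. In these coordinates the identity established just before the proposition reads $\operatorname{Sing}(\hat{X})=\phi(\{(a,b): a\in\Omega_1\cap\mathbb{R},\,b\in\Omega_2\cap\mathbb{R},\,g_1(a)=g_2(b)\})$. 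By Lemma \ref{newlemma}, any point of $\hat{\Omega}$ has $a\in\operatorname{Sing}(X_1)$ and $b\in\operatorname{Sing}(X_2)$; together with the hypothesis $\operatorname{Sing}(X_k)\subset A_k$ this forces $a\in A_1\cap\mathbb{R}$ and $b\in A_2\cap\mathbb{R}$ for every element of $\operatorname{Sing}(\hat{X})$.

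Next I would set $\tilde A_1:=A_1\cap\mathbb{R}$ and $\tilde A_2:=A_2\cap\mathbb{R}$, which are compact subsets of $\mathbb{R}$, and observe that $\operatorname{Sing}(\hat{X})=\phi(\{(a,b)\in\tilde A_1\times\tilde A_2 : g_1(a)=g_2(b)\})$; here the membership conditions $a\in\Omega_1$, $b\in\Omega_2$ are automatic because $A_1\subset\Omega_1$ and $A_2\subset\Omega_2$. Since $A_1$ and $A_2$ contain no poles of $g_1$ and $g_2$, these functions are holomorphic, hence continuous, on $\tilde A_1$ and $\tilde A_2$, and they are real-valued there by conjugate symmetry on the real axis. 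Therefore $\Psi(a,b):=g_1(a)-g_2(b)$ is a continuous real-valued function on the compact product $\tilde A_1\times\tilde A_2$, its zero set $\Psi^{-1}(\{0\})$ is closed in that compact set and hence compact, and applying $\phi$ shows $\operatorname{Sing}(\hat{X})$ is compact.

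The step carrying the real content—and the only place the hypotheses are genuinely used—is the continuity of $g_1,g_2$ on the confining compact sets, which rests on pole-avoidance. Without that assumption a sequence in $\operatorname{Sing}(\hat{X})$ could converge to a boundary point of $\hat{\Omega}$ lying on one of the pole lines deleted to form $\chi$ (the set in the remark preceding the proposition), so that $\operatorname{Sing}(\hat{X})$, while closed in $\chi$, would fail to be closed in $\mathbb{R}^2$. The requirement that $\operatorname{Sing}(X_k)$ sit inside a compact, pole-free $A_k$ is exactly what prevents this escape to the poles and upgrades closedness-in-$\chi$ to compactness. The remaining points—that $\phi$ is a homeomorphism, that $g_k$ is real on $\mathbb{R}$, and that the confinement to $\tilde A_1\times\tilde A_2$ is valid—are routine.
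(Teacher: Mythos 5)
Your proof is correct and follows essentially the same route as the paper's: both use Lemma \ref{newlemma} together with the hypothesis $\operatorname{Sing}(X_k)\subset A_k$ to confine $\operatorname{Sing}(\hat{X})$ inside the compact set $(A_1\mathbf{e}+A_2\mathbf{e}^{\dagger})\cap\mathbb{D}$ (your $\phi(\tilde{A}_1\times\tilde{A}_2)$), and then conclude by closedness inside that compact set. The only difference is packaging: the paper invokes the previously noted fact that $\operatorname{Sing}(\hat{X})$ is closed in $\chi$, whereas you re-derive closedness explicitly as the zero set of the continuous function $\Psi(a,b)=g_1(a)-g_2(b)$ on $\tilde{A}_1\times\tilde{A}_2$, which makes the role of the pole-avoidance hypothesis more transparent but is not a different argument.
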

\begin{proof}
   From the Lemma \ref{newlemma} and the fact $\operatorname{Sing}(\hat{X})\subset \hat{\Omega}$, we have $\operatorname{Sing}(\hat{X}) \subset (A_1 \mathbf{e}+ A_2 \mathbf{e}^\dagger) \cap \mathbb{D} \subset \chi$. Since $A_1, A_2$ are compact subsets, $(A_1 \mathbf{e}+ A_2 \mathbf{e}^\dagger) \cap \mathbb{D}$ is also compact. Hence, $\operatorname{Sing}(\hat{X})$ is compact, being a closed subset of a compact set.
\end{proof}
\begin{cor}
    If $\operatorname{Sing}(X_1)$ and $\operatorname{Sing}(X_2)$ are compact, then so is $\operatorname{Sing}(\hat{X})$.
\end{cor}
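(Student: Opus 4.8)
The plan is to obtain this corollary as a direct specialisation of Proposition \ref{Compactness_singularity}, by taking the compact sets $A_1,A_2$ appearing there to be the singularity sets themselves. Concretely, I would set $A_1:=\operatorname{Sing}(X_1)$ and $A_2:=\operatorname{Sing}(X_2)$. Under the standing hypothesis that $\operatorname{Sing}(X_1)$ and $\operatorname{Sing}(X_2)$ are compact, these are compact subsets of $\mathbb{C}(\mathbf{i})$, and the inclusions $\operatorname{Sing}(X_k)\subset A_k\subset\Omega_k$ hold trivially, since by definition $\operatorname{Sing}(X_k)=\{z\in\Omega_k:\operatorname{Im}(g_k(z))=0\}$ is already a subset of $\Omega_k$ (for $k=1,2$).

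The only hypothesis of Proposition \ref{Compactness_singularity} that is not purely formal is the requirement that $A_k$ contain no pole of $g_k$, and I would verify this directly from the definition of the singularity set: at a pole $p$ of $g_k$ one has $g_k(p)=\infty$, so $\operatorname{Im}(g_k(p))$ cannot equal zero, whence $p\notin\operatorname{Sing}(X_k)=A_k$. Each $A_k$ is therefore automatically disjoint from the pole set of $g_k$. It is worth noting that compactness is genuinely doing work here: a priori the singularity set could accumulate at a real pole of $g_k$ — for instance $g_k(\beta)=1/\beta$ has $\operatorname{Im}(g_k)\equiv 0$ along the punctured real axis, whose closure meets the pole at the origin — but such a set fails to be closed and hence is not compact, so the hypothesis excludes this pathology.

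With $A_1$ and $A_2$ chosen in this way, all hypotheses of Proposition \ref{Compactness_singularity} are satisfied, and I would simply invoke it to conclude that $\operatorname{Sing}(\hat{X})$ is compact. The main (indeed essentially the only) point requiring care is the verification in the previous paragraph that the choice $A_k=\operatorname{Sing}(X_k)$ genuinely avoids the poles of $g_k$; once this is settled, the corollary follows immediately from the proposition.
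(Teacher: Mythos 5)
Your proposal is correct and matches the paper's intent exactly: the corollary is stated there without a separate proof precisely because it is the specialization $A_k=\operatorname{Sing}(X_k)$ of Proposition \ref{Compactness_singularity}, which is what you carry out. Your verification that the poles of $g_k$ are automatically excluded from $\operatorname{Sing}(X_k)$ (since $\operatorname{Im}\,g_k=0$ cannot hold at a pole) is the one non-formal hypothesis, and you check it correctly.
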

 
\begin{prop}
   $\operatorname{Sing}(\hat{X})$ is a discrete subset of $\hat{\Omega}$ or whole of $\hat{\Omega}$.
\end{prop}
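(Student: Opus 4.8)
The plan is to exhibit $\operatorname{Sing}(\hat X)$ as the zero set of a single real-analytic function and then apply the identity principle for real-analytic functions, working on each connected component of $\chi$. Starting from the characterization already derived, $\operatorname{Sing}(\hat X)=\{x_1+\mathbf{k}\,x_4\in\hat\Omega : g_1(x_1+x_4)=g_2(x_1-x_4)\}$, I would pass to the characteristic coordinates $u=x_1+x_4,\ v=x_1-x_4$ and set $F(u,v):=g_1(u)-g_2(v)$, so that $\operatorname{Sing}(\hat X)=F^{-1}(0)$. In these coordinates $\hat\Omega$ corresponds to the product $(\Omega_1\cap\mathbb{R})\times(\Omega_2\cap\mathbb{R})$, and $\chi$ is obtained by deleting the finitely many lines $u=a_i$ and $v=b_j$ through the real poles of $g_1$ and $g_2$; consequently each connected component $C$ of $\chi$ is a product of open intervals $C_1\times C_2$.

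The first step is to verify that $F$ is a well-defined real-valued real-analytic function on $\chi$: since $g_1,g_2$ are conjugate-symmetric their restrictions to the real axis are real-valued (this is exactly the computation in Lemma \ref{newlemma}), and since they are meromorphic they are real-analytic away from their poles, while $u,v$ are affine in $(x_1,x_4)$. The second step is the dichotomy on a fixed component $C$: by the identity theorem for real-analytic functions of two variables, either $F\equiv 0$ on $C$, or $F^{-1}(0)\cap C$ has empty interior. In the degenerate case $F\equiv 0$ on some $C=C_1\times C_2$, fixing $v\in C_2$ and letting $u$ range over $C_1$ shows that $g_1$ is constant on the interval $C_1$, hence on the component of $\Omega_1$ meeting it, by the identity theorem for meromorphic functions; symmetrically $g_2$ is constant, and the two constants coincide. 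When the Weierstrass data are defined on connected domains this forces $F\equiv 0$ throughout $\hat\Omega$, so $\operatorname{Sing}(\hat X)=\hat\Omega$. Assembling the components, together with the fact already noted that $\operatorname{Sing}(\hat X)$ is a closed subset of $\chi$, would then give the stated alternative.

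The main obstacle is the passage from \emph{empty interior} (equivalently, nowhere dense) to \emph{discrete} in the non-degenerate case. In characteristic coordinates the gradient of $F$ is $(g_1'(u),-g_2'(v))$, so a point of $F^{-1}(0)$ is isolated only when $g_1'$ and $g_2'$ vanish there simultaneously; at any coincidence where one of the two derivatives survives, the implicit function theorem produces a one-dimensional analytic branch rather than an isolated point. Thus genuine discreteness requires every zero of $F$ to lie in the product of the critical sets $\{g_1'=0\}\times\{g_2'=0\}$, which are themselves discrete for nonconstant meromorphic data. Without an extra nondegeneracy hypothesis the surviving zero set is generically a curve — for instance the Weierstrass data of Example \ref{example} give $\operatorname{Sing}(\hat X)=\{x_4=0\}$, a one-dimensional set — so this is precisely the delicate point: one must either impose conditions on $(g_1,g_2)$ guaranteeing that every coincidence is a common critical value, or restate the non-degenerate alternative as ``a proper real-analytic subset'' rather than a discrete one. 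The remaining bookkeeping (real-analyticity, closedness, and the reduction in the $F\equiv 0$ case) I expect to be routine.
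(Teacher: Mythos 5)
Your proposal is essentially the paper's own argument --- the paper also introduces $F = g_1 - g_2$ and invokes an identity-theorem dichotomy --- but you carry it out correctly where the paper does not, and the obstacle you flag at the end is a genuine error in the paper, not a defect of your write-up. The paper's proof treats $F(\beta_1,\beta_2)=g_1(\beta_1)-g_2(\beta_2)$ as a holomorphic function of two complex variables and asserts that if $F\not\equiv 0$ then its zero set ``can not have a limit point.'' This transplants the one-variable identity theorem to $\mathbb{C}^2$, where it fails: the zero set of a nonzero holomorphic function of two variables is a complex hypersurface with no isolated points whatsoever (already $F(\beta_1,\beta_2)=\beta_2-\beta_1$, which is exactly what the data of Example \ref{example} produce, vanishes on the whole diagonal). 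The same failure persists for the real-analytic restriction $F(u,v)=g_1(u)-g_2(v)$, $u=x_1+x_4$, $v=x_1-x_4$, that actually governs $\operatorname{Sing}(\hat{X})$: the identity principle only yields that $F^{-1}(0)$ either is everything or has empty interior, and your implicit-function-theorem remark shows that in the latter case the zero set is generically a one-dimensional analytic curve, not a discrete set.

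Your counterexample settles the matter and comes from the paper itself: for $g_1(\beta_1)=-\beta_1$, $g_2(\beta_2)=-\beta_2$ (Example \ref{example}), the characterization $\operatorname{Sing}(\hat{X})=\{x_1+\mathbf{k}\,x_4\in\hat{\Omega}: g_1(x_1+x_4)=g_2(x_1-x_4)\}$ gives precisely $\{x_4=0\}$, a punctured line in $\mathbb{D}\backslash S$, which is neither discrete nor all of $\hat{\Omega}$. So the proposition is false as stated, and no proof can close the gap you identify. The correct repair is the one you propose: either weaken the conclusion to ``$\operatorname{Sing}(\hat{X})$ is a closed, nowhere dense (real-analytic) proper subset of $\hat{\Omega}$, or all of $\hat{\Omega}$,'' or add a nondegeneracy hypothesis forcing every coincidence $g_1(u)=g_2(v)$ to occur at common critical points of $g_1$ and $g_2$ --- a hypothesis the paper's own example violates. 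Your treatment of the degenerate alternative ($F\equiv 0$ on a component forces $g_1$ and $g_2$ to be equal constants, hence $\operatorname{Sing}(\hat{X})=\hat{\Omega}$ when $\Omega_1,\Omega_2$ are connected) is correct and is the only part of the dichotomy that survives unchanged.
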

\begin{proof}
 Consider the analytic function of two complex variables $F:\Omega_1 \times \Omega_2 \rightarrow \mathbb{C}$, defined by $F(\beta_1,\, \beta_2)=g_1(\beta_1)-g_2(\beta_2),\,\forall (\beta_1,\beta_2)\in\Omega_1 \times \Omega_2$, where $g_1,\,g_2$ are as before and $\hat{\Omega}=\tilde{\Omega}\cap\mathbb{D}$, $\tilde{\Omega}$ is identified to $\Omega_1 \times \Omega_2$. Note that $F\equiv 0$ gives $\operatorname{Sing}(\hat{X})=\hat{\Omega}$, and if $F\not\equiv 0$, then in any connected open subset of $\Omega_1 \times \Omega_2$, the set $\{(\beta_1,\beta_2): F(\beta_1,\beta_2)=0\}$ can not have a limit point. This translates into the fact that if $a_0 \mathbf{e}+ b_0 \mathbf{e}^{\dagger} \in \operatorname{Sing}(\hat{X})$, then there exists a sufficiently small neighbourhood of $a_0 \mathbf{e}+ b_0 \mathbf{e}^{\dagger}$ containing no other singularity of $\hat{X}$.  
\end{proof}

 \section{Asymptotic behaviour of simple ends} \label{section: Asymptotic behavior of simple ends}
 It is a well-known fact that an embedded end of a complete minimal surface in $\mathbb{E}^3$ with finite
total curvature is
asymptotic to either a plane or a half of a catenoid (see \cite{jorge1983topology}).
 Analogous to this fact, Imaizumi in \cite{Imaizumi} characterizes the asymptotic behaviour of simple end for maximal surfaces into three types. Here, we start by recalling the definition of a simple end of a maximal surface.

 \begin{definition}(\cite{Imaizumi}) A maximal surface $X$  with Weierstrass data $(g,f)$ has a simple end at point $p$ if  $\alpha=\left(\left(1-g^2\right),2g,\left(1+g^2\right)\right)f$ has a pole of order two at point $p$.

 \end{definition}

 Let $\hat{X}=\frac{1}{2}(X_1+X_2) \bigr|_{\mathbb{D}}$ be a timelike minimal surface as in Proposition \ref{sum of two maximal surfaces}. Then we have the following definition
 \begin{definition}

A timelike minimal surface $\hat{X}=\frac{1}{2}(X_1+X_2) \bigr|_{\mathbb{D}}$ is said to have a simple end at $p_1 \mathbf{e}+p_2\mathbf{e}^\dagger \in \hat{\Omega}$, if the maximal surfaces $X_1$ and $X_2$ have simple ends at $p_1$ and $p_2$, respectively.

\end{definition}

\begin{Note} 
It is evident that the above definition of simple ends aligns with the Definition \ref{End definition} of ends for timelike minimal surfaces. 
\end{Note}
 
We now present a version of Imaizumi's classification of the ends of a maximal surface that suits our purpose. This is done by requiring the Weierstrass data $ (g, f)$ to be conjugate-symmetric in Theorem 2.7 of \cite{Imaizumi}, we obtain the following lemma:

 \begin{lemma}\label{ends of maximal}
Let $\Omega= \{\beta \in \mathbb{C}(\mathbf{i}): |\beta|<1\} $and let $\Omega^*:=\Omega \backslash\{0\}$. Assume $X: \Omega^* \rightarrow \mathbb{E}_1^3$ is a maximal surface with conjugate-symmetric Weierstrass data $(g,f)$ such that the end $0$ is simple. Then $X$ in a neighbourhood of $0$ is asymptotic to one of the following types:
\begin{enumerate}
    \item $\left(a r^{-1} \cos \theta, \,c \log r,\, a r^{-1} \cos \theta\right)$,
\item $\left(a r^{-1} \cos \theta+c \log r,\, 0,\,a r^{-1} \cos \theta+c \log r\right)$,

\end{enumerate}
 where $\beta:=r e^{\mathbf{i} \theta}\in \Omega, a \in \mathbb{R} \backslash\{0\}$, and $c \in \mathbb{R}$.
     
 \end{lemma}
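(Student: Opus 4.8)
The plan is to reduce the desired statement to Theorem 2.7 of Imaizumi by tracking how the conjugate-symmetry hypothesis on the Weierstrass data $(g,f)$ forces a reality constraint on the asymptotic expansion. First I would recall the general structure of a simple end: since $\alpha=\left(\left(1-g^2\right),2g,\left(1+g^2\right)\right)f$ has a pole of order two at $0$, Imaizumi's analysis gives that, after integration, $X$ is asymptotic near $0$ to an explicit model governed by the residue data of $\alpha$. Concretely, one expands $g$ and $f$ as Laurent series about $0$, reads off the coefficients that control the $r^{-1}$ (pole) and $\log r$ (logarithmic, i.e. residue) contributions to $X=\operatorname{Re}\int \alpha\,d\beta$, and substitutes $\beta=re^{\mathbf{i}\theta}$. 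The three asymptotic types in Imaizumi's original theorem correspond to whether the leading coefficient of the order-two pole is nonzero and whether the residue is real or imaginary.

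The key step is to impose conjugate-symmetry, $g(\bar\beta)=\overline{g(\beta)}$ and $f(\bar\beta)=\overline{f(\beta)}$, which forces all Laurent coefficients of $g$ and $f$ at $0$ to be real (here I use that $0\in\mathbb{R}$ is a fixed point of conjugation, so the Taylor/Laurent coefficients must equal their own conjugates). I would then show that this reality of the coefficients makes the residue of each component of $\alpha$ real, which is precisely the condition that collapses Imaizumi's three cases to the two listed: the surviving first-and-third-coordinate terms coincide (giving the common entry $ar^{-1}\cos\theta$, respectively $ar^{-1}\cos\theta+c\log r$), while the middle coordinate either contributes a pure $c\log r$ term (type 1) or vanishes (type 2). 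The constants $a\in\mathbb{R}\setminus\{0\}$ and $c\in\mathbb{R}$ are then identified with the real leading pole coefficient and the real residue, respectively, with $a\neq0$ reflecting that the pole genuinely has order two.

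The main obstacle I anticipate is not the integration itself but bookkeeping the effect of conjugate-symmetry on the vector-valued integrand: one must verify that the specific combinations $\left(1-g^2\right)f$, $2gf$, and $\left(1+g^2\right)f$ inherit real Laurent coefficients and that taking $\operatorname{Re}$ of the integral then produces exactly the trigonometric/logarithmic shapes claimed, with the first and third components forced to be equal. I would handle this by noting that $\operatorname{Re}\int \beta^{-2}\,d\beta$ and $\operatorname{Re}\int \beta^{-1}\,d\beta$ yield $-r^{-1}\cos\theta$ (up to sign) and $\log r$ respectively when the coefficients are real, so the reality constraint is exactly what kills the $\sin\theta$ and imaginary-$\log$ (i.e. $\theta$-linear) terms that would otherwise appear in Imaizumi's third, excluded type. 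Once these real-coefficient computations are assembled, the statement follows directly by specializing Theorem 2.7 of \cite{Imaizumi} to conjugate-symmetric data.
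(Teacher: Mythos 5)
Your proposal takes essentially the same route as the paper: the paper's entire proof is the single remark that Lemma \ref{ends of maximal} is obtained by imposing conjugate-symmetry of $(g,f)$ in Theorem 2.7 of \cite{Imaizumi}, which is exactly your reduction. Your additional bookkeeping --- that $0$ is a real point fixed by conjugation, so all Laurent coefficients of $g$, $f$ (hence of the components of $\alpha$) are real, which kills the $\sin\theta$ and $\theta$-linear contributions and thereby eliminates Imaizumi's third asymptotic type --- is precisely the mechanism the paper leaves implicit, so the proposal is consistent with (and more detailed than) the paper's own argument.
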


 Let $\beta_1, \beta_2 \in \mathbb{C}(\mathbf{i})\backslash\{0\}$. If we write $\beta_1 = r_1 (\cos{\theta_1} + \mathbf{i} \sin{\theta_1})$ and $\beta_2 = r_2 (\cos{\theta_2} + \mathbf{i} \sin{\theta_2})$, where, $r_k>0$ and $\theta_k \in (-\pi,\pi] $ for $k=1,2$; then $\tilde{z} =\beta_1 \mathbf{e} + \beta_2 \mathbf{e}^{\dagger} \in \mathbb{D}\backslash\{0\}$ if and only if $\theta_1=0 \text{ or } \pi$ and $\theta_2=0 \text{ or } \pi$.  Therefore, $\epsilon_1 r_1 \mathbf{e}+\epsilon_2 r_2\mathbf{e}^{\dagger} \in \mathbb{D}\backslash\{0\}$, where $\epsilon_k\in\{1,-1\}$ for $k=1,2$, and here we say $\tilde{f}\bigr|_{\mathbb{D}}(\beta_1 \mathbf{e} + \beta_2 \mathbf{e}^{\dagger})= \tilde{f}(\epsilon_1 r_1 \mathbf{e}+\epsilon_2 r_2\mathbf{e}^{\dagger})$. 

Now, the behaviour of an end of $\hat{X}$ depends on the behaviour of $X_1$ and $X_2$ in a neighbourhood of $0$. Let $\hat{\Omega}=(\Omega_1 \mathbf{e}+\Omega_2 \mathbf{e}^\dagger) \cap \mathbb{D}$, where $\Omega_1= \{\beta_1 \in \mathbb{C}(\mathbf{i}): |\beta_1|<1\}$ and $\Omega_2= \{\beta_2 \in \mathbb{C}(\mathbf{i}): |\beta_2|<1\}$.
Hence, we have the following result:

\begin{theorem}\label{Asymptotic behaviour}
    Let  $\hat{X}:\hat{\Omega}\backslash S \rightarrow \mathbb{E}_1^3$ be a timelike minimal surface given by $\hat{X}=\frac{1}{2}(X_1+X_2) \bigr|_{\mathbb{D}}$. Then $\hat{X}$ in a neighbourhood of  $0$ is asymptotic to one of the following types: 
    \begin{enumerate}
    \item $\hat{X}(\epsilon_1 r_1 \mathbf{e}+\epsilon_2 r_2\mathbf{e}^{\dagger})=\frac{1}{2}( 
    (\epsilon_1 a_1 r_1^{-1}+\epsilon_2 a_2 r_2^{-1} , c_1 \log r_1 + c_2 \log r_2, \epsilon_1 a_1 r_1^{-1}+\epsilon_2 a_2 r_2^{-1} ) $

   \item $\hat{X}(\epsilon_1 r_1 \mathbf{e}+\epsilon_2 r_2\mathbf{e}^{\dagger})=\frac{1}{2}\left( 
   (\epsilon_1 a_1 r_1^{-1}+\epsilon_2 a_2 r_2^{-1} +c_2 \log r_2, c_1 \log r_1, \epsilon_1 a_1 r_1^{-1}+\epsilon_2 a_2 r_2^{-1}+c_2 \log r_2 ) \right.$ 
     \item $\hat{X}(\epsilon_1 r_1 \mathbf{e}+\epsilon_2 r_2\mathbf{e}^{\dagger})=\frac{1}{2}(\epsilon_1 a_1 r_1^{-1} +c_1 \log r_1+\epsilon_2 a_2 r_2^{-1},c_2 \log r_2,\epsilon_1 a_1 r_1^{-1}+c_1 \log r_1+\epsilon_2 a_2 r_2^{-1}) $
   \item $\hat{X}(\epsilon_1 r_1 \mathbf{e}+\epsilon_2 r_2\mathbf{e}^{\dagger})=\frac{1}{2} 
   (\epsilon_1 a_1 r_1^{-1} +c_1 \log r_1+\epsilon_2 a_2 r_2^{-1} +c_2 \log r_2, 0,\epsilon_1 a_1 r_1^{-1}+c_1 \log r_1+\epsilon_2 a_2 r_2^{-1}+c_2 \log r_2)$ 
  
\end{enumerate}
 where, $\epsilon_k\in\{1,-1\}$ for $k=1,2$, $a_1,a_2 \in \mathbb{R} \backslash\{0\}$, and $c_1,c_2 \in \mathbb{R}$.
\end{theorem}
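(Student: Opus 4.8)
The plan is to reduce the whole statement to Imaizumi's classification for a single maximal surface and then combine. Since $\hat X=\frac12(X_1+X_2)\bigr|_{\mathbb D}$ is assumed to have a simple end at $0=0\,\mathbf e+0\,\mathbf e^{\dagger}$, the very definition of a simple end of $\hat X$ says that $X_1$ and $X_2$ each have a simple end at $0$. As $\Omega_1$ and $\Omega_2$ are the unit disks and each $X_k$ has conjugate-symmetric Weierstrass data, I would apply Lemma \ref{ends of maximal} to $X_1$ and to $X_2$ separately. This yields, in a neighbourhood of $0$, that each $X_k$ ($k=1,2$) is asymptotic to either the first or the second model of that lemma, each with its own constants $a_k\in\mathbb R\setminus\{0\}$, $c_k\in\mathbb R$ and its own polar variable $\beta_k=r_k e^{\mathbf i\theta_k}$.

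Next I would restrict to the split-complex plane. As recorded in the paragraph preceding the statement, a point $\beta_1\mathbf e+\beta_2\mathbf e^{\dagger}$ lies in $\mathbb D\setminus\{0\}$ exactly when $\theta_1,\theta_2\in\{0,\pi\}$, i.e. $\beta_k=\epsilon_k r_k$ with $\epsilon_k=\cos\theta_k\in\{1,-1\}$. Substituting $\cos\theta_k=\epsilon_k$ into the two models of Lemma \ref{ends of maximal} collapses the first type to $(\epsilon_k a_k r_k^{-1},\,c_k\log r_k,\,\epsilon_k a_k r_k^{-1})$ and the second to $(\epsilon_k a_k r_k^{-1}+c_k\log r_k,\,0,\,\epsilon_k a_k r_k^{-1}+c_k\log r_k)$. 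The key point here is that the asymptotics of $X_1$ and $X_2$ are read off in the two independent idempotent components $\beta_1$ and $\beta_2$, so the angles $\theta_1,\theta_2$, and hence the signs $\epsilon_1,\epsilon_2$ and radii $r_1,r_2$, may be chosen independently; this is precisely why each coordinate slot of the limiting surface carries its own $\epsilon_k r_k$.

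I would then form $\hat X=\frac12(X_1+X_2)$ via Proposition \ref{sum of two maximal surfaces} and run through the $2\times2$ possibilities for the asymptotic type of the pair $(X_1,X_2)$. Because averaging is linear it preserves the leading-order asymptotics, so that the combination $(\text{type }1,\text{type }1)$ produces model $(1)$ of the theorem, $(\text{type }1,\text{type }2)$ produces model $(2)$, $(\text{type }2,\text{type }1)$ produces model $(3)$, and $(\text{type }2,\text{type }2)$ produces model $(4)$; these four combinations exhaust all cases, which gives the statement.

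The individual computations are entirely routine once the restriction is in place, so the only genuine obstacle is the second step: checking that restricting to $\mathbb D$ forces exactly the four sign patterns $\epsilon_k\in\{1,-1\}$, that the factor $\cos\theta_k$ in Lemma \ref{ends of maximal} degenerates correctly to $\epsilon_k$ along each real ray while $r_k=|\beta_k|$ is left untouched, and that this happens independently in the two components. I would apply the evaluation convention $\tilde f\bigr|_{\mathbb D}(\beta_1\mathbf e+\beta_2\mathbf e^{\dagger})=\tilde f(\epsilon_1 r_1\mathbf e+\epsilon_2 r_2\mathbf e^{\dagger})$ consistently throughout, so that the two independent radii and the two independent signs surface in the stated models exactly as written.
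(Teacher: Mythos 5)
Your proposal is correct and follows essentially the same route as the paper, whose proof is simply the citation of Proposition \ref{sum of two maximal surfaces} and Lemma \ref{ends of maximal}: apply the lemma to $X_1$ and $X_2$ separately, restrict to $\mathbb{D}$ so that $\cos\theta_k$ degenerates to $\epsilon_k\in\{1,-1\}$, and average, with the four $2\times 2$ combinations yielding the four stated models. Your write-up merely makes explicit the case analysis the paper leaves to the reader.
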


Proof of the above theorem can be seen from Proposition \ref{sum of two maximal surfaces} and Lemma \ref{ends of maximal}.\\

\begin{example}

Recall in Example \ref{example}, both \( X_1 \) and \( X_2 \) have poles at \( 0 \in \mathbb{C}(\mathbf{i}) \). Consequently, the timelike surface \( \hat{X} \) also has a pole at \( 0\mathbf{e} + 0 \mathbf{e}^{\dagger} = 0 \in \mathbb{D} \backslash S \). The asymptotic behaviour of \( X_1 \) around $0$ is given by:
\[
\left( -\frac{1}{r_1}\cos{\theta_1}, 2\ln{r_1}, -\frac{1}{r_1}\cos{\theta_1} \right),
\]
and that of \( X_2 \) is:
\[
\left( -\frac{1}{r_2}\cos{\theta_2}, 2\ln{r_2}, -\frac{1}{r_2}\cos{\theta_2} \right),
\]
Therefore, \( \hat{X} \) in a neighbourhood of $0$ is asymptotic to:
\[
\frac{1}{2}  \left( -\frac{\epsilon_1}{r_1}-\frac{\epsilon_2}{r_2}, 2\log{r_1}+2\ln{r_2}, -\frac{\epsilon_1}{r_1}-\frac{\epsilon_2}{r_2}\right) .
\]
\end{example}
\section{ Timelike minimal surface with an arbitrary number of weak complete ends} \label{section: arbitrary number of ends}
The construction of a complete genus zero maximal surface with an arbitrary number of ends has been studied by P. Kumar and S.R.R. Mohanty in \cite{kumar2023genus}. We aim to construct a timelike minimal surface with an arbitrary number of weak complete ends. We have divided this construction into two steps: the first step is to find suitable Weierstrass data for two maximal surfaces that solve the period condition on the punctured complex plane, which also satisfies the conjugate-symmetry property (see Remark \ref{conjugate-symmetric}). For this, we have to modify the Weierstrass data of a maximal surface as given in Section $3$ of \cite{kumar2023genus}.


 Let $g_k$ be a conjugate-symmetric meromorphic function on $\mathbb{C}(\mathbf{i}) \cup \{\infty\} $ having poles only at the points $(p_k)_i$ of order $(x_k)_i$, where $k=1,\,2$ and $i = {1, \ldots, n}$. Without loss of generality, for each $k$ we can assume that $(p_k)_i$ is real for $i=1,\ldots,n-1$ and $(p_k)_n=\infty $. 
 Now, consider the two conjugate-symmetric holomorphic functions $f_1$ and $f_2$ as follows:
$$
f_1(\beta_1)=\sum_{i=1}^{n-1} \frac{(a_1)_i}{\left(\beta_1-(p_1)_i\right)^2}+\sum_{j=0}^{2 n-2} (b_1)_j \beta_1^j
$$\\
and
$$
f_2(\beta_2)=\sum_{i=1}^{n-1} \frac{(a_2)_i}{\left(\beta_2-(p_2)_i\right)^2}+\sum_{j=0}^{2 n-2} (b_2)_j \beta_2^j,
$$\\
 where ${(a_k)_i}, (b_k)_j$ are real numbers which are to be determined later.\\
 Next, we define $\tilde{g} := g_1 \mathbf{e} + g_2 \mathbf{e}^{\dagger}$ and $\tilde{f} := f_1 \mathbf{e} + f_2 \mathbf{e}^{\dagger}$, which satisfy the equation \eqref{5.4}, since $g_k, f_k$ (for $k = {1, 2}$) are conjugate-symmetric. At the same time, we take $(x_1)_i = (x_2)_i$ for all $i = {1, \ldots, n}$, which will ensure that $\tilde{g} = g_1 \mathbf{e} + g_2 \mathbf{e}^{\dagger}$ has a pole at $\hat{p}_i = (p_1)_i \mathbf{e} + (p_2)_i \mathbf{e}^{\dagger}$ of order $(x_1)_i$. Moreover, $\tilde{g}\bigr|_{\mathbb{D}} = \hat{g}$ has pole at $\hat{p}_i$ of order $(x_1)_i$.

Now only thing is to show that $(g_k, f_k)$ (for $k = {1, 2}$) are the Weierstrass data for some maximal surface $X_k$ on $\mathbb{C}(\mathbf{i}) \cup \{\infty\} \backslash \{(p_k)_1, \ldots, (p_k)_n\}$. For that, we need to show there exists a real vector $\left((a_k)_i, (b_k)_j\right)\in \mathbb{R}^{n-1}\times \mathbb{R}^{2n-1}$ such that the period condition, as in Theorem \ref{thm maximal}, holds on $\mathbb{C}(\mathbf{i}) \cup \{\infty\} \backslash \left\{(p_k)_1, \ldots, (p_k)_n\right\}$. The pair $\left(g_k, f_k\right)$ satisfies the period condition if the residues of $g_k^2 f_k$ and $g_k f_k$ vanish at the points $(p_k)_1,\ldots,(p_k)_{n-1}$. This gives us the following homogeneous system of linear equations in the variables $(a_k)_i$ and $(b_k)_j$:
\begin{equation}\label{residue}
\operatorname{Res}_{(p_k)_i}\left(g_k f_k\right)=0, \quad \operatorname{Res}_{(p_k)_i}\left(g_k^2 f_k\right)=0, \quad i=1, \ldots ,n-1; \quad k=1,2.
\end{equation}

  Since $g_k,\,f_k$ are conjugate-symmetric, the above system consists of at most $2n-2$ linear equations with real coefficients having $3n-2$ variables $(a_k)_i$ and $(b_k)_j$.  Therefore, the solution space for the system of equations \eqref{residue} has dimension at least $n$. This allows us to choose suitable non-zero vector $\left((a_k)_i , (b_k)_j\right)$ such that $\left(g_k, f_k\right)$ satisfies the period condition. Then from Proposition \ref{sum of two maximal surfaces} we get a timelike minimal surface with the Weierstrass data $(\tilde{g}\bigr|_{\mathbb{D}}, \tilde{f}\bigr|_{\mathbb{D}})$ in $\mathbb{D}\backslash (\{\hat{p}_1, \ldots,\hat{p}_{n-1}\}+S)$ having ends at $\hat{p}_i$.

It may happen that for some $i\,(i=1, \ldots ,n-1), (a_k)_i=0$, in which case $(p_k)_i$ may not be a pole of $f_k$. This implies $\tilde{f}\bigr|_{\mathbb{D}}$ may not have a pole at $\hat{p}_i$. Thus, while the pair $(\tilde{g}\bigr|_{\mathbb{D}},\tilde{f}\bigr|_{\mathbb{D}})$ gives a timelike minimal surface, it may not be weak complete around the neighbourhood of the end $\hat{p}_i$.
 Hence, we aim to modify $\tilde {f}$ such that $\tilde {p}_i$ becomes a weak complete end. Let us consider the following functions:
$$
(f_k)_i(\beta_k)=\sum_{l=1}^{2 n} \frac{(\alpha_k)_l}{\left(\beta_k-(p_k)_i\right)^{2+l}}
$$
 for $i=1, \ldots ,n-1$, where $\left((\alpha_k)_l\right) \in \mathbb{R}^{2 n}$ such that $\left(g_k, (f_k)_i\right)$ satisfies the period condition for maximal surfaces and  atleast one $(\alpha_k)_l \neq 0$. 
 Therefore, we have the following homogeneous system of linear equations in the variables $(\alpha_k)_l$:
\begin{equation*}
\operatorname{Res}_{(p_k)_j}\left(g_k (f_k)_i\right)=0, \quad \operatorname{Res}_{(p_k)_i}\left(g_k^2 (f_k)_i\right)=0, \quad j=1, \ldots ,n-1; \quad k=1,2.
\end{equation*}
This gives us $2 n-2$ independent linear homogeneous equations with $2 n$ variables. Hence, there exists  non-zero $(\alpha_k)_l$.
Now, if we define $F_k:=f_k+\sum_{i=1}^{n-1} (f_k)_i$, then $(g_k,\,F_k)$ will satisfy the period condition on $\mathbb{C}(\mathbf{i}) \cup\{\infty\} \backslash\left\{(p_k)_1, \ldots, (p_k)_n\right\}$ such that all $(p_k)_i$ 's are poles of $g_k$ and $F_k$. Thus, we have $\tilde{g}\bigr|_{\mathbb{D}}=\hat{g}$ and $\tilde{F}\bigr|_{\mathbb{D}}=\hat{F}$, both having poles at $\hat{p}_i$, ensuring all endpoints are weakly complete. Consequently, we arrive at the following theorem:

\begin{theorem} \label{main result}
    Given $k \in \mathbb{N}$ and $\hat{p}_1,\ldots,\hat{p}_k\in \mathbb{D}$, there exists a timelike minimal surface $\hat{X}:\mathbb{D}\backslash(\{\hat{p}_1,\ldots,\hat{p}_k\}+S)\rightarrow \mathbb{E}_1^3$ having $k$ weak complete ends at the points $\hat{p}_1,\ldots,\hat{p}_k$.
\end{theorem}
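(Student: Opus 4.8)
The plan is to produce $\hat{X}$ in the form $\hat{X} = \frac{1}{2}(X_1+X_2)\bigr|_{\mathbb{D}}$ supplied by Proposition \ref{sum of two maximal surfaces}, thereby reducing the statement to the construction of two maximal surfaces $X_1,X_2$ on the complex plane whose bicomplexification carries the prescribed ends. Concretely, I would write each prescribed point by means of \eqref{subham_3} as $\hat{p}_i = (p_1)_i\mathbf{e} + (p_2)_i\mathbf{e}^{\dagger}$ with $(p_1)_i,(p_2)_i\in\mathbb{R}$, set $n=k+1$ (treating $\infty$ as the auxiliary $n$-th pole), and choose for each $\ell=1,2$ a conjugate-symmetric meromorphic function $g_\ell$ on $\mathbb{C}(\mathbf{i})\cup\{\infty\}$ whose poles are exactly the real points $(p_\ell)_i$, together with a conjugate-symmetric holomorphic $f_\ell$ of the form displayed in the construction preceding this theorem. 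Since $g_\ell,f_\ell$ are conjugate-symmetric, Remark \ref{conjugate-symmetric} guarantees that $\tilde{g}=g_1\mathbf{e}+g_2\mathbf{e}^{\dagger}$ and $\tilde{f}=f_1\mathbf{e}+f_2\mathbf{e}^{\dagger}$ satisfy \eqref{5.4}, so that $\hat{X}$ will be a genuine timelike minimal surface with Weierstrass data $(\tilde{g}\bigr|_{\mathbb{D}},\tilde{f}\bigr|_{\mathbb{D}})$ as soon as the period condition is met.

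The heart of the argument, and the step I expect to be the main obstacle, is the period condition: defining and solving it directly on the disconnected domain $\mathbb{D}\backslash(\{\hat{p}_1,\ldots,\hat{p}_k\}+S)$ is intractable for lack of split-complex analytic tools, and it is precisely the bicomplex viewpoint that circumvents this. As shown in the discussion leading up to Proposition \ref{sum of two maximal surfaces}, the real period condition \eqref{subh01} for the bicomplex surface splits into the two independent period conditions for $X_1$ and $X_2$ on the \emph{connected} complex domains $\Omega_1,\Omega_2$. Each of these is the classical maximal-surface period condition of Theorem \ref{thm maximal}, and for the chosen data it is equivalent to the vanishing of the residues in \eqref{residue}. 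This is a homogeneous real linear system of at most $2n-2$ equations in the $3n-2$ unknowns $(a_\ell)_i,(b_\ell)_j$; hence its solution space has dimension at least $n>0$, and I would select a nonzero solution, producing maximal surfaces $X_1,X_2$ that satisfy their period conditions and, through Proposition \ref{sum of two maximal surfaces}, a timelike minimal surface with ends at the points $\hat{p}_i$.

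It remains to upgrade these ends to \emph{weak complete} ends, which is the second substantive step. A priori some coefficient $(a_\ell)_i$ in the chosen solution could vanish, in which case $(p_\ell)_i$ need not be a genuine pole of $f_\ell$, so $\tilde{f}\bigr|_{\mathbb{D}}$ may fail to blow up at $\hat{p}_i$ and completeness with respect to \eqref{weak complete} is not forced. To remedy this I would add the correction terms $(f_\ell)_i$ displayed above: the requirement that $(g_\ell,(f_\ell)_i)$ again satisfy the period condition is a homogeneous system of $2n-2$ equations in the $2n$ unknowns $(\alpha_\ell)_l$, which admits a nonzero solution. Setting $F_\ell := f_\ell+\sum_{i=1}^{n-1}(f_\ell)_i$, the pair $(g_\ell,F_\ell)$ still satisfies the period condition while now forcing every $(p_\ell)_i$ to be a pole of both $g_\ell$ and $F_\ell$.

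Consequently $\hat{g}=\tilde{g}\bigr|_{\mathbb{D}}$ and $\hat{F}=\tilde{F}\bigr|_{\mathbb{D}}$ have poles at each $\hat{p}_i$, so, exactly as in the completeness argument for the analogous maximal-surface construction of \cite{kumar2023genus}, the induced $\mathbb{E}^3$-metric \eqref{weak complete} is complete in a neighbourhood of each $\hat{p}_i$. Invoking Proposition \ref{sum of two maximal surfaces} once more with the data $(g_\ell,F_\ell)$ then yields the desired timelike minimal surface $\hat{X}:\mathbb{D}\backslash(\{\hat{p}_1,\ldots,\hat{p}_k\}+S)\rightarrow\mathbb{E}^3_1$ with $k$ weak complete ends at $\hat{p}_1,\ldots,\hat{p}_k$, completing the argument.
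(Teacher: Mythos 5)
Your proposal is correct and follows essentially the same route as the paper's own construction: decompose the prescribed points via the idempotent representation, build conjugate-symmetric Weierstrass data $(g_\ell, f_\ell)$ whose residue (period) conditions form an underdetermined homogeneous linear system admitting a nonzero solution, add the correction terms $(f_\ell)_i$ to force genuine poles at every $(p_\ell)_i$ for weak completeness, and conclude via Proposition \ref{sum of two maximal surfaces}. The dimension counts ($3n-2$ unknowns versus at most $2n-2$ equations, then $2n$ versus $2n-2$) and the identification $n=k+1$ with $\infty$ as the auxiliary pole match the paper exactly.
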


\begin{remark}
  Now, the constructed timelike minimal surface $\hat{X}$ with the Weierstrass data $(\hat{g},\hat{F} )$ is weak complete if the degenerate set of metric $\hat{h}_{\mathbb{E}^3}$ (see equation \eqref{weak complete}) is compact. Clearly, this is not the case unless we assume the domain of $\hat{X}$ to be bounded. 
\end{remark}

\section{Acknowledgement}
The authors are very grateful to Dr. Pradip Kumar for his valuable discussions. The first and third authors acknowledge the funding received from the University Grants Commission of India (UGC) under the UGC-JRF scheme. The second author is partially supported by an external grant from the Science and Engineering Research Board (SERB) (MATRICS, File No. MTR/2023/000990).


\bibliography{Priyank}

@book{Luna2015,
  author    = {M. Elena Luna-Elizarrar{\'{a}}s and Michael Shapiro and Daniele C. Struppa and Adrian Vajiac},
  title     = {Bicomplex Holomorphic Functions},
  subtitle  = {The Algebra, Geometry and Analysis of Bicomplex Numbers},
  series    = {Frontiers in Mathematics},
  publisher = {Birkh{\"{a}}user},
  address   = {Cham},
  year      = {2015},

  isbn      = {978-3-319-24866-0},
  eisbn     = {978-3-319-24868-4},
  edition   = {1},
  issn      = {1660-8046},
  eissn     = {1660-8054}
}

@article{Luna,
  author    = {M. Elena Luna-Elizarrar{\'{a}}s and C. Oscar P{\'{e}}rez-Regalado and Michael Shapiro},
  title     = {Singularities of bicomplex holomorphic functions},
  journal   = {Mathematical Methods in the Applied Sciences},
  year      = {2021},
  pages     = {1--16}
 
}

@article{Erdem,
  author    = {Sadettin Erdem},
  title     = {Harmonic maps of Lorentz surfaces, quadratic differentials and paraholomorphicity},
  journal   = {Beitr{\"a}ge Algebra Geom.},
  fjournal  = {Beitr{\"a}ge zur Algebra und Geometrie. Contributions to Algebra and Geometry},
  volume    = {38},
  year      = {1997},
  number    = {1},
  pages     = {19--32},
  issn      = {0138-4821},
  mrclass   = {58E20 (53C50)},
  mrnumber  = {1447983},
  mrreviewer = {Luis M. Hervella}
}

@article{Inoguchi,
  author    = {Jun-ichi Inoguchi and Mitsuhiro Toda},
  title     = {Timelike Minimal Surfaces via Loop Groups},
  journal   = {Acta Applicandae Mathematicae},
  volume    = {83},
  year      = {2004},
  pages     = {313--355}
  
}

@article{Romero,
  author    = {Francisco J. M. Estudillo and Alfonso Romero},
  title     = {Generalized maximal surfaces in Lorentz–Minkowski space \(\mathbb{L}^3\)},
  journal   = {Mathematical Proceedings of the Cambridge Philosophical Society},
  volume    = {111},
  number    = {3},
  pages     = {515--524},
  year      = {1992}
  
}

@article{Konderak,
  author    = {Jerzy J. Konderak},
  title     = {A Weierstrass representation theorem for Lorentz surfaces},
  journal   = {Complex Variables, Theory and Application: An International Journal},
  volume    = {50},
  number    = {5},
  pages     = {319--332},
  year      = {2005},
  publisher = {Taylor \& Francis}
}

@article{Kato,
  author    = {Kaname Hashimoto and Shin Kato},
  title     = {Bicomplex extensions of zero mean curvature surfaces in \( \mathbb{R}^{2,1} \) and \( \mathbb{R}^{2,2} \)},
  journal   = {Journal of Geometry and Physics},
  volume    = {138},
  pages     = {223--240},
  year      = {2019},
  issn      = {0393-0440}
}

@article{Imaizumi,
  title={MAXIMAL SURFACES WITH SIMPLE ENDS},
  author={Taishi Imaizumi},
  journal={Kyushu Journal of Mathematics},
  volume={58},
  number={1},
  pages={59-70},
  year={2004}
}

@phdthesis{McNertney,
  
  author    = {L. McNertney},
  title     = {One-parameter families of surfaces with constant curvature in Lorentz 3-space},
  school    = {Brown University},
  year      = {1980},
  type      = {Ph.D. Thesis}
}

@article{Kim,
  author    = {Young Wook Kim and Sung-Eun Koh and Heayong Shin and Seong-Deog Yang},
  title     = {Spacelike maximal surfaces, timelike minimal surfaces, and Björling representation formulae},
  journal   = {Journal of the Korean Mathematical Society},
  volume    = {48},
  number    = {5},
  pages     = {1083--1100},
  year      = {2011},
  issn      = {0304-9914, 2234-3008},
  mrclass   = {53A35 (53B30)},
  mrnumber  = {2850077},
  mrreviewer = {Uğur Dursun}
}

@article{Akamine,
author = {Shintaro Akamine},
title = {{Behavior of the Gaussian curvature of timelike minimal surfaces with singularities}},
volume = {48},
journal = {Hokkaido Mathematical Journal},
number = {3},
publisher = {Hokkaido University, Department of Mathematics},
pages = {537 -- 568},
keywords = {Gaussian curvature, Lorentz-Minkowski space, singularity, timelike minimal surface, wave front},
year = {2019}
}

@article{kumar2023genus,
  author    = {Pradip Kumar and Sai Rasmi Ranjan Mohanty},
  title     = {Genus zero complete maximal maps and maxfaces with an arbitrary number of ends},
  journal   = {C. R. Math. Acad. Sci. Paris},
  fjournal  = {Comptes Rendus Math\'ematique. Acad\'emie des Sciences. Paris},
  volume    = {361},
  year      = {2023},
  pages     = {1683--1690},
  issn      = {1631-073X, 1778-3569},
  mrclass   = {53A10 (53A55)},
  mrnumber  = {4683343}
}

@article{Charak,
  author    = {Kuldeep Singh Charak and Dominic Rochon and Narinder Sharma},
  title     = {Normal families of bicomplex meromorphic functions},
  journal   = {Ann. Polon. Math.},
  fjournal  = {Annales Polonici Mathematici},
  volume    = {103},
  number    = {3},
  pages     = {303--317},
  year      = {2012},
  issn      = {0066-2216, 1730-6272},

  
}

@article{umehara2006maximal,
  author    = {Masaaki Umehara and Kotaro Yamada},
  title     = {Maximal surfaces with singularities in {M}inkowski space},
  journal   = {Hokkaido Math. J.},
  fjournal  = {Hokkaido Mathematical Journal},
  volume    = {35},
  number    = {1},
  pages     = {13--40},
  year      = {2006},
  issn      = {0385-4035},
  mrclass   = {53A10 (53C42 53C50)},
  mrnumber  = {2225080},
  mrreviewer = {Pablo Mira}
}

@article{lin19973,
  author    = {Senchun Lin and Tilla Weinstein},
  title     = {\( \mathbb{E}^3 \)-complete timelike surfaces in \( \mathbb{E}^3_1 \) are globally hyperbolic},
  journal   = {Michigan Math. J.},
  fjournal  = {Michigan Mathematical Journal},
  volume    = {44},
  number    = {3},
  pages     = {529--541},
  year      = {1997},
  issn      = {0026-2285, 1945-2365},
  mrclass   = {53C50},
  mrnumber  = {1481117},
  mrreviewer = {Paul E. Ehrlich}


}

@article{jorge1983topology,
  title = {The topology of complete minimal surfaces of finite total Gaussian curvature},
journal = {Topology},
volume = {22},
number = {2},
pages = {203-221},
year = {1983},
issn = {0040-9383},
author = {Luquesio P. Jorge and William H. Meeks}
}

@article {Kobayashi,
    AUTHOR = {Kobayashi, Osamu},
     TITLE = {Maximal surfaces in the {$3$}-dimensional {M}inkowski space
              {$\mathbb{L}^3$}},
   JOURNAL = {Tokyo J. Math.},
  FJOURNAL = {Tokyo Journal of Mathematics},
    VOLUME = {6},
      YEAR = {1983},
    NUMBER = {2},
     PAGES = {297--309},
      ISSN = {0387-3870},
   MRCLASS = {53A10 (53A35 53B30)},
  MRNUMBER = {732085},
MRREVIEWER = {Larry\ Graves}
}

@book {Weinstein,
    AUTHOR = {Weinstein, Tilla},
     TITLE = {An introduction to {L}orentz surfaces},
    SERIES = {De Gruyter Expositions in Mathematics},
    VOLUME = {22},
 PUBLISHER = {Walter de Gruyter \& Co.},
address   = {Berlin},
      YEAR = {1996},
     PAGES = {xiv+213},
      ISBN = {3-11-014333-X},
   MRCLASS = {53C50},
  MRNUMBER = {1405166},
MRREVIEWER = {Paul\ E.\ Ehrlich}
}
\bibliographystyle{ieeetr}

\end{document}